\documentclass[12pt]{amsart}
\usepackage{amsmath,amsthm,amsfonts,amscd,amssymb,eucal,latexsym,mathrsfs, appendix, yhmath, setspace}
\usepackage{color,colortbl}
\usepackage[numbers,sort&compress]{natbib}
\usepackage[all,cmtip]{xy}
\usepackage{enumerate}

\newtheorem{theorem}{Theorem}[section]
\newtheorem{corollary}[theorem]{Corollary}
\newtheorem{lemma}[theorem]{Lemma}
\newtheorem{proposition}[theorem]{Proposition}

\theoremstyle{definition}
\newtheorem{definition}[theorem]{Definition}
\newtheorem{remark}[theorem]{Remark}

\newtheorem{example}[theorem]{Example}
\newtheorem{question}[theorem]{Question}

\DeclareMathOperator{\Ima}{im \ }

\newcommand{\gen}{{\rm gen}}

\newcommand{\rk}{{\rm rk}}
\newcommand{\mrk}{{\rm mrk}}

\newcommand{\mdim}{{\rm mdim}}
\newcommand{\ml}{{\rm m\ell}}
\newcommand{\mL}{{\rm mL}}

\newcommand{\rL}{{\rm L}}

\newcommand{\tr}{{\rm tr}}

  \newcommand{\cC}{{\mathcal C}}
  
  \newcommand{\cF}{{\mathcal F}}

 \newcommand{\cM}{{\mathcal M}}
 \newcommand{\cN}{{\mathcal N}}
 \newcommand{\cP}{{\mathcal P}}

 \newcommand{\bN}{{\mathbb N}}
 
 \newcommand{\bQ}{{\mathbb Q}}
 \newcommand{\bR}{{\mathbb R}}
 
 \newcommand{\bZ}{{\mathbb Z}}
  \newcommand{\RG}{{R \Gamma}}
 \newcommand{\ZG}{{\mathbb Z \Gamma}}
\newcommand{\LG}{{\mathcal L \Gamma }}

\newcommand{\sN}{{\mathscr N}}

\allowdisplaybreaks

\begin{document}

\title{Mean weak length}

\author{Zihan Bai}

\address{\hskip-\parindent
Z.B., School of Mathematical Sciences, Soochow University,
Suzhou 215006, China.}
\email{20254007003@stu.suda.edu.cn}

\author{Bingbing Liang}
\address{\hskip-\parindent
B.L., School of Mathematical Sciences, Soochow University,
Suzhou 215006, China.}
\email{bbliang@suda.edu.cn}

\subjclass[2020]{Primary 37A35, 20C07, 37B40, 20K30.}
\keywords{mean length, addition formula, amenable group, algebraic entropy, Sylvester rank function, Lehmer's problem}

\begin{abstract}
We introduce a weak version of the classical length function, termed the weak length function, defined on subsets of $R$-modules relative to the ambient $R$-modules over a unital ring $R$. We further consider the concept of mean weak length for each $R\Gamma$-module relative to a bigger $R\Gamma$-module  associated with an amenable group $\Gamma$. Under an appropriate upgrading condition together with certain mild assumptions, we establish that the mean weak length satisfies an  addition formula with respect to short exact sequences. 

This result has three applications. First, we provide a purely algebraic proof of the additivity of algebraic entropy, which is a property originally established via topological entropy methods. Second, within our unified framework, we give an alternative and conceptual proof of the additivity of mean length, previously obtained in \cite{LL18} and \cite{V19} using different approaches. Third, we recover a particular case of amenable extension for Sylvester rank functions in the spirit of \cite{JL21}.

\end{abstract}

\maketitle

\section{Introduction}

The validity of additivity is a central problem for invariants on pure algebraic objects. In the category of $R$-modules for a given unital ring $R$, a numerical invariant ${\rm i}$ is said to have the property of {\it additivity}, or the so-called {\it addition formula}, if for any left  $R$-modules $\cM_1 \subseteq \cM_2$, one has
$${\rm i}(\cM_2)={\rm i}(\cM_1)+{\rm i}(\cM_2/\cM_1).$$
The notion of mean length, termed in \cite{LL18, LL19}, is a class of numerical invariants for modules over the group ring $R\Gamma$ of a discrete group $\Gamma$ with coefficients in $R$. Its definition is based on a length function $\rL$ on $R$-modules introduced by Northcott-Reufel \cite{NR65}.   Salce-Zanardo first initiated an axiomatic approach for $\Gamma=\bZ$ \cite{SZ09} and the addition formula was established by Scale-V\'{a}mos-Virili in the case that $\rL$ is discrete \cite{SVV13}. Salce-Virili considered the case $\Gamma=\bZ^d$ and $\rL$ is nondiscrete \cite{SV16}. Elek considered the case that $R$ is any field and $\Gamma$ is any amenable group \cite{E03}. Li-Liang dealt with the case that $\rL(R)< \infty$ for amenable groups in \cite{LL18} and sofic groups in \cite{LL19}. Virili established the additivity for a general length $\rL$ and any amenable group $\Gamma$  in \cite{V19}. Note that the validity of additivity for mean length is restricted to the locally finite modules in the sense that all finitely generated submodules have finite length values.

On the other hand, algebraic entropy for $\ZG$-modules defined by Peters attracted great attention due to its direct connection with the topological entropy of Pontryagin dual actions \cite{P79}, which is the so-called {\it Bridge theorem}. Motivated by the connection between Mahler measure and topological entropy established by Yuzvinskii\cite{Y67}, one can even use the algebraic entropy to give a direct characterization of Lehmer's problem \cite{LSW90, DG16}.  Unfortunately, the definition of algebraic entropy is not based on a length function (though torsion modules are locally finite and can be dealt within the tool of mean length) and its additivity remains inaccessible to standard mean length approaches.  Taking advantage of Bridge theorem, for any amenable group $\Gamma$, Li confirmed the validity of additivity of algebraic entropy by establishing the additivity of topological entropy for a class of dynamical systems including algebraic actions \cite{L12}.  This motivates the following questions:

\begin{question}
    Is there a pure algebraic proof of additivity for algebraic entropy? Moreover, is there a unified approach of additivity for algebraic entropy and mean length?
\end{question}

 There has been much study on the additivity for algebraic entropy \cite{W74}. A complete solution for the case $\Gamma=\bZ$ is given by Dikranjan-Giordano Bruno \cite{DG16}.  Recently the additivity is established for actions of monotileable amenable groups by Dikranjan-Fornasiero-Giordano Bruno-Salizzoni \cite{DFGS23}. The crucial ingredient in  \cite{DFGS23} is to introduce a bivariant function on subsets of abelian groups as a main tool to estimate the size of a module in terms of submodules.

A complementary perspective arises from the systematic study of Sylvester module rank functions \cite{L21, JL21}. Although a Sylvester module rank function 
$\dim(\cdot)$ needs not satisfy the classical additivity property, it admits a unique bivariant extension $\dim(\cdot|\cdot)$ such that $\dim(\cM|\cM)=\dim(\cM)$  for every $R$-module $\cM$ (see \cite[Theorem 3.3]{L21}). With this bivariant notion, an alternative addition formula holds: for any $R$-modules $\cM_1 \subseteq \cM_2$,
 $$\dim(\cM_2)=\dim(\cM_1|\cM_2)+\dim(\cM_2/\cM_1).$$
 Building on this alternative addition formula, Jiang-Li vastly extended Sylvester module rank functions from $R$-modules to amenable $S$-modules, encompassing the amenable algebras over a field (in the sense of Gromov and Elek) and the group ring $R\Gamma$ for an amenable group $\Gamma$. All these results rely heavily on the assumption $\dim(R)<\infty$, which naturally raises the question: what can be said when $\dim(R)=\infty$?

In this paper, we establish a more flexible framework to address the issues mentioned above. To unify the distinct approaches of mean length and algebraic entropy, our idea is to first relax the defining requirements of a length function, thereby introducing a weaker variant that encompasses both the classical length function and the logarithmic function arising in the definition of algebraic entropy. We call such a function a {\it weak length function}, defined on the subsets of $R$-modules relative to the ambient modules. Based on this notion, we can define an invariant on pairs of $\RG$-modules $\cM_1 \subseteq \cM_2$, called {\it mean weak length}, to generalize both mean length and algebraic entropy.  To overcome the core difficulty of upper bound estimation in the addition formula, motivated from the bivariant functions in \cite{DFGS23}, we also introduce a bivariant version of weak length functions. It turns out that as a weak length function can be upgraded to a ``good" version, the desired upper bound estimate can be successfully achieved. Our main result is the following.

\begin{theorem} \label{main theorem}
Let $R$ be a unital ring and $\Gamma$ a discrete amenable group. Let $\ell$ be an  $R$-weak length function with the strong quotient property. If $\ell$ can be upgraded to a proper bivariant $R$-weak length function, then for any locally $\ell$-finite $\RG$-modules $\cM_1 \subseteq \cM_2 \subseteq \cM_3$, we have
$$\ml(\cM_2|\cM_3)=\ml(\cM_1|\cM_3)+\ml(\cM_2/\cM_1|\cM_3/\cM_1).$$
In particular, if $\ell$ is independent of ambient modules, the mean weak length $\ml$ on $\RG$-modules  satisfies the addition formula. That is,  
$$\ml(\cM_2)=\ml(\cM_1)+\ml(\cM_2/\cM_1).$$
\end{theorem}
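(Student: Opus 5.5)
We prove the two inequalities separately. Presumably the mean weak length is defined as $\ml(\cM|\cN)=\sup_{A}\lim_F \ell(F A|\cN)/|F|$, with $A$ ranging over finite (or finitely generated, $\ell$-finite) subsets of $\cM$ and $F$ running along a F\o lner sequence, and the limit existing by the Ornstein--Weiss lemma. Throughout, write $\pi\colon \cM_3\to\cM_3/\cM_1$ for the quotient map.

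\emph{Lower bound: $\ml(\cM_2|\cM_3)\ge \ml(\cM_1|\cM_3)+\ml(\cM_2/\cM_1|\cM_3/\cM_1)$.} Fix finite $A\subseteq \cM_1$ and finite $\bar B\subseteq \cM_2/\cM_1$, and lift $\bar B$ to a finite $B\subseteq\cM_2$. For each F\o lner set $F$, apply the strong quotient property to $F(A\cup B)$, with the submodule $\cM_1$ and ambient module $\cM_3$. This gives
$$\ell(F(A\cup B)|\cM_3)\ge \ell(F(A\cup B)\cap\cM_1|\cM_3)+\ell(\pi(F B)|\cM_3/\cM_1)\ge \ell(FA|\cM_3)+\ell(F\bar B|\cM_3/\cM_1),$$
where the second step uses monotonicity of $\ell$. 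Dividing by $|F|$, passing to the limit, and then taking suprema over $A$ and $\bar B$ yields the lower bound. I expect this step to be routine once the precise form of the strong quotient property is recalled.

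\emph{Upper bound.} This is the core, and it is where the bivariant upgrade $\ell(\cdot|\cdot,\cdot)$ is used, in the spirit of \cite{DFGS23}. Fix finite $C\subseteq\cM_2$. I would aim to show that, up to an error $o(|F|)$,
$$\ell(FC|\cM_3)\le \ell(FC\,\text{-relative part in }\cM_1)+\ell(F\pi(C)|\cM_3/\cM_1).$$
Properness of the bivariant function should supply a subadditivity of the form
$$\ell(X+Y|\cM_3)\le \ell(X|\cM_3)+\ell(Y\mid X,\cM_3)$$
together with the comparison $\ell(Y\mid \cM_1,\cM_3)\le\ell(\pi Y|\cM_3/\cM_1)$. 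The difficulty is that $FC\cap\cM_1$ is not of the form $FA$ for a fixed finite $A$. To handle this, I would use local $\ell$-finiteness. The $\RG$-submodule generated by $C$ intersected with $\cM_1$ is exhausted by finite sets $A$, and I would show that for suitable $A$ (depending on $C$ and $\varepsilon$, e.g.\ $A=(KC)\cap\cM_1$ for a large finite $K\subseteq\Gamma$) one has
$$\ell(FC\mid F A,\cM_3)\le \ell(F\pi(C)|\cM_3/\cM_1)+\varepsilon|F|.$$

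I would try to obtain this by a quasitiling argument: tile $F$ by translates of $K$, and on each tile compare the bivariant value relative to $KA$ with the value relative to all of $\cM_1$. Invariance under $\Gamma$ and properness (continuity of the bivariant function in the conditioning set as it increases to $\cM_1$) should make the defect per tile smaller than $\varepsilon|K|$. Summing over tiles, and using the subadditivity of the bivariant function, gives the estimate; the boundary of $F$ contributes only $o(|F|)$ because $\ell$ is finite on finitely generated pieces. Dividing by $|F|$ and letting $\varepsilon\to0$ then gives $\ml(\cM_2|\cM_3)\le$ the right-hand side.

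I expect this tiling/approximation step to be the main obstacle. In particular, it requires checking that the convergence of $\ell(KC\mid KA,\cM_3)$ to $\ell(K\pi C|\cM_3/\cM_1)$ can be made uniform relative to $|K|$.

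Finally, the "in particular" statement follows by taking $\cM_3=\cM_2$, since when $\ell$ is independent of the ambient module, the identity reduces to $\ml(\cM_2)=\ml(\cM_1)+\ml(\cM_2/\cM_1)$.
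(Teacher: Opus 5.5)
Your overall architecture is the paper's: the strong quotient property for the easy inequality, and a quasitiling, properness (ii) tile by tile, sum-subadditivity of the bivariant function and properness (i) for the hard one. However, the first inequality in your lower-bound chain is not available. The strong quotient property only says $\ell(A+B|\cM)\ge\ell(\varphi(A)|\cN)+\ell(B|\cM)$ for $A\in\cF^0(\cM)$ and $B\in\cF^0(\ker\varphi)$. It gives nothing like $\ell(X|\cM_3)\ge\ell(X\cap\cM_1|\cM_3)+\ell(\pi(X)|\cM_3/\cM_1)$ for a union $X=F(A\cup B)$, and that inequality is false for $\ell=\log|\cdot|$. Take trivial $\Gamma$, $\cM_2=\cM_3=\bZ^2$, $\cM_1=\{0\}\times\bZ$, $A=\{(0,0),(0,1)\}$, $B=\{(0,0),(1,0)\}$: it reads $\log 3\ge\log 2+\log 2$, which is the counterexample of Example~\ref{wlexample}(1). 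Relatedly, $\ml$ is built from the sums $A^{[F]}=\sum_{s\in F}s^{-1}A$, not from $FA$. With unions, algebraic entropy would vanish for every infinite $\Gamma$, since $\log|FA|\le\log|F|+\log|A|$. The repair is to use sums. For $A\in\cF^0(\cM_1)$ and a lift $B\in\cF^0(\cM_2)$ of $\bar B$, one has $(A+B)^{[F]}=B^{[F]}+A^{[F]}$ with $A^{[F]}\subseteq\cM_1=\ker\pi$. The strong quotient property then gives $\ell((A+B)^{[F]}|\cM_3)\ge\ell(\bar B^{[F]}|\cM_3/\cM_1)+\ell(A^{[F]}|\cM_3)$, with $A+B\in\cF^0(\cM_2)$.

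For the upper bound you are on the paper's route, and the obstacle you single out (uniformity in $|K|$) disappears once the choices are made in the right order.

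\textbf{Step 1.} Fix $\varepsilon>0$ and $(K_0,\delta_0)$ with $\ell(\pi(C)^{[F']}|\pi(\cM_3))\le(\ml(\pi(C)|\pi(\cM_3))+\varepsilon)|F'|$ for all $(K_0,\delta_0)$-invariant $F'$.

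\textbf{Step 2.} A general amenable group cannot be tiled by one shape, so use the quasitiling lemma \cite[Theorem 8.3]{L12}. It gives finitely many $(K_0,\delta_0)$-invariant shapes $F_1,\dots,F_m\ni e$ such that every sufficiently invariant $F$ contains disjoint translates $F_jc$, $c\in D_j$, covering all but $\varepsilon|F|$ of it.

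\textbf{Step 3.} These shapes are fixed before $F$. So properness (ii), applied to the finitely many sets $C^{[F_j]}$, gives finite $B_j\subseteq\cM_1$ with $\ell(C^{[F_j]},B_j|\cM_3)<\ell(\pi(C)^{[F_j]}|\pi(\cM_3))+\varepsilon$. This is an absolute error $\varepsilon\le\varepsilon|F_j|$, so nothing has to be uniform.

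\textbf{Step 4.} Put $B=\{0\}\cup\bigcup_jB_j$. Translation invariance, $B_j\subseteq B\subseteq B^{[F_j]}$ (as $e\in F_j$), and the fact that $\ell(X,\cdot|\cM_3)$ decreases as its second argument grows give $\ell(C^{[F_jc]},B^{[F_jc]}|\cM_3)\le(\ml(\pi(C)|\pi(\cM_3))+2\varepsilon)|F_j|$ on each tile. The uncovered part costs at most $\varepsilon|F|\,\ell(C|\cM_3)<\infty$. Sum-subadditivity $\ell(X+X',Y+Y'|\cM)\le\ell(X,Y|\cM)+\ell(X',Y'|\cM)$, which again needs the sum definition, then bounds $\ell(C^{[F]},B^{[F]}|\cM_3)$ by $(\ml(\pi(C)|\pi(\cM_3))+2\varepsilon+\varepsilon\,\ell(C|\cM_3))|F|$.

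\textbf{Step 5.} Properness (i), in the form $\ell(C^{[F]}|\cM_3)\le\ell(C^{[F]},B^{[F]}|\cM_3)+\ell(B^{[F]}|\cM_3)$, finishes. Your variant $\ell(X+Y|\cM)\le\ell(X|\cM)+\ell(Y,X|\cM)$ is not what properness gives, and it fails for $\log|\cdot|$ with $X=Y=\{0,\dots,n\}\subseteq\bZ$.

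Finally, the conditioning set must come from properness (ii); a concrete guess like $(KC)\cap\cM_1$ fails in general. For example, take trivial $\Gamma$, $R=\bZ/4$ with composition length, $\cM_1=2R\subseteq R$ and $C=\{0,1\}$. Then $(KC)\cap\cM_1=\{0\}$, and $\ell(C,\{0\}|R)=2>1=\ell(\pi(C)|R/2R)$.
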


After verifying the upgrading condition for algebraic entropy, mean length, and Sylvester module rank function (Propositions \ref{length upgrading} and \ref{log upgrading}), we obtain the following consequences:
\begin{corollary} \label{main cor}
\begin{enumerate}
    
    \item For any $\ZG$-modules $\cM_1 \subseteq \cM_2$, we have
$$h(\cM_2)=h(\cM_1)+h(\cM_2/\cM_1),$$
where $h(\cM)$ denotes the algebraic entropy of a $\ZG$-module $\cM$.
\item Suppose that $\rL$ is a length function on $R$-modules. Then for any locally $\rL$-finite $\RG$-modules $\cN_1 \subseteq \cN_2$, one has
$$\mL(\cN_2)=\mL(\cN_1)+\mL(\cN_2/\cN_1).$$
\item Suppose that $\dim$ is a bivariant Sylvester $R$-module rank function. Then for any $\RG$-modules $\cM_1 \subseteq \cM_2 \subseteq \cM_3$, we have
$$\mdim(\cM_2|\cM_3)=\mdim(\cM_1|\cM_3)+\mdim(\cM_2/\cM_1|\cM_3/\cM_1).$$
\end{enumerate}
\end{corollary}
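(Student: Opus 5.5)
The plan is to derive Corollary \ref{main cor} from Theorem \ref{main theorem}. For each of the three settings, I will choose a weak length function $\ell$ and check four things: the strong quotient property, the upgrading to a proper bivariant weak length function, the local $\ell$-finiteness of the modules in question, and that $\ml$ recovers the classical invariant. The upgrading checks are exactly Propositions \ref{length upgrading} and \ref{log upgrading}, which I take as given. What is left is to match definitions and verify the finiteness and quotient hypotheses.

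\emph{Part (1).} Take $R=\bZ$ and let $\ell(A|\cM)=\log|A|$ for finite subsets $A$ of an abelian group $\cM$. Outside finite sets I set $\ell=\infty$, or use whatever convention the paper adopts for infinite subsets. This $\ell$ does not depend on the ambient module. By Proposition \ref{log upgrading} it has the strong quotient property and upgrades to a proper bivariant function; the natural candidate is $\log|A+\cM_1|/|\cM_1|$-type counting, in the spirit of \cite{DFGS23}. Every $\ZG$-module is locally $\ell$-finite, since the relevant objects are finite subsets. Hence Theorem \ref{main theorem} gives additivity of $\ml$.

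It then remains to show $\ml(\cM)=h(\cM)$. Peters' definition is
$$h(\cM)=\sup_A\lim_{F}\frac{1}{|F|}\log\Bigl|\sum_{s\in F}sA\Bigr|,$$
the supremum over finite $A$ and the limit along a F{\o}lner sequence. I expect this to match the definition of $\ml$ once the relevant sum-sets are identified; if $\ml$ is defined via a relative inf/sup over finite sets, I would reconcile the two by monotonicity.

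\emph{Part (2).} Take $\ell(A|\cM)=\rL(\langle A\rangle_R)$, the length of the $R$-submodule generated by the finite set $A$. This is again independent of the ambient module. The strong quotient property and the upgrading come from Proposition \ref{length upgrading}. Local $\ell$-finiteness is exactly local $\rL$-finiteness, because finitely generated $R$-submodules have finite length. The identification $\ml=\mL$ follows from comparing the definitions: $\mL$ is the supremum over finitely generated submodules of the F{\o}lner limit of $\rL(\sum_{s\in F}s\cN_0)/|F|$.

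\emph{Part (3).} Use the bivariant rank function directly: $\ell(A|\cM)=\dim(\langle A\rangle|\cM)$. This $\ell$ genuinely depends on the ambient module, which is why the statement is the relative three-module version. The strong quotient property should follow from the alternative addition formula
$$\dim(\cM_2)=\dim(\cM_1|\cM_2)+\dim(\cM_2/\cM_1)$$
together with the uniqueness of the bivariant extension \cite[Theorem 3.3]{L21}. The upgrading is the Sylvester case of Proposition \ref{length upgrading}.

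The main obstacle in part (3) is local $\ell$-finiteness when $\dim(R)=\infty$. I would address it either by restricting to modules where $\dim$ of finitely generated submodules is finite, or by using the convention that $\mdim$ is defined through such pieces. The first thing to try is checking that the paper's definition of locally $\ell$-finite is automatically satisfied because $\ell$ is evaluated on finitely generated pieces. Once that holds, the first formula of Theorem \ref{main theorem} applied to $\cM_1\subseteq\cM_2\subseteq\cM_3$ is precisely the claimed identity for $\mdim$.
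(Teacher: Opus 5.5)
Your reduction is the paper's: the corollary is Theorem \ref{main theorem} applied to $\log|\cdot|$, $\rL(\langle\cdot\rangle_R)$ and $\dim(\langle\cdot\rangle_R|\cdot)$, with the hypotheses supplied by Propositions \ref{length upgrading} and \ref{log upgrading}. The one step you leave open is local $\ell$-finiteness in part (3). You call it the main obstacle and propose either restricting the class of modules or adopting some unspecified convention.

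Neither fallback is needed. The restriction, as you state it, would also only give the formula for a subclass of modules, whereas the corollary is asserted for all $\RG$-modules. The case $\dim(R)=\infty$ cannot arise, because $\dim(R|R)=1$ is one of the axioms of a bivariant Sylvester rank function. Hence for every finite $C\subseteq\cM$,
\[
\ell(C|\cM)=\dim(\langle C\rangle_R|\cM)\le\dim(\langle C\rangle_R|\langle C\rangle_R)\le\dim(R^{|C|}|R^{|C|})=|C|.
\]
The first inequality is lower continuity with $\cM_2'=\langle C\rangle_R$. The second is Theorem \ref{deep S-property}(3) applied to the surjection $R^{|C|}\to\langle C\rangle_R$. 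The final equality is the direct sum axiom. So every $\RG$-module is locally $\ell$-finite. This is the observation the paper makes in the proof of Proposition \ref{length upgrading}, where it also justifies subtracting $\ell(C|\cM)$ when checking properness.

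The reason is not simply that ``$\ell$ is evaluated on finitely generated pieces''. A length function can be infinite on a cyclic module (e.g.\ $\nu(\bZ)=+\infty$), which is exactly why part (2) genuinely needs local $\rL$-finiteness. In part (3) it is the normalization that does the work.

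There are two smaller points. In part (1), your guessed upgrade (counting the cosets of a subgroup that $A$ meets) is not the one that works. The paper uses the covering quantity $\ell(A,B|\cM)=\min\{\log|C|: A\subseteq C+B\}$. The quotient-type $\log|\pi_{\langle B\rangle}(A)|$ is an upgrading but is not proper: in $\cM=\bZ$ with $B=\{1\}$ and $A=\{0,1\}$ it gives $\log 2\not\le 0+0$ in property i). This matters because the proof of Theorem \ref{main theorem} needs $\ell(A^{[F]}|\cM_3)\le\ell(A^{[F]},B^{[F]}|\cM_3)+\ell(B^{[F]}|\cM_3)$ with $B^{[F]}$ a finite set, not a subgroup. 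It does not damage your argument, since you only cite the proposition.

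In part (3), the strong quotient property does not come from uniqueness of the bivariant extension. It comes from the relative additivity, Theorem \ref{deep S-property}(1), applied to $\langle B\rangle_R\subseteq\langle A+B\rangle_R\subseteq\cM$, combined with Theorem \ref{deep S-property}(3).
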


Applying the Bridge theorem \cite[Theorem 13.41]{KLb}, we can in turn  provide an algebraic proof of the addition formula for algebraic actions.
\begin{corollary}
    For any compact Hausdorff abelian group $X$ carrying a continuous $\Gamma$-action by automorphisms and $Y \leq X$ as a $\Gamma$-invariant closed subgroup, one has
    $$h(X)=h(Y)+h(X/Y).$$
Here $X/Y$ is the quotient group carrying an induced $\Gamma$-action by quotients and $h(X)$ denotes the topological entropy of a continuous action $\Gamma \curvearrowright X$.
\end{corollary}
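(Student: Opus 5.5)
The plan is to move the statement to the dual side with Pontryagin duality, use the Bridge theorem \cite[Theorem 13.41]{KLb} to turn topological entropies into algebraic entropies, and then apply Corollary \ref{main cor}(1). First I would fix conventions. For a compact Hausdorff abelian group $X$ with a continuous $\Gamma$-action by automorphisms, the Pontryagin dual $\widehat{X}$ is a discrete abelian group. It becomes a left $\ZG$-module via $(s\chi)(x)=\chi(s^{-1}x)$, or through whichever convention \cite{KLb} uses, so that the Bridge theorem reads $h(X)=h(\widehat{X})$. The left-hand side is topological entropy and the right-hand side is Peters' algebraic entropy of the $\ZG$-module. I will use exactly the convention under which \cite[Theorem 13.41]{KLb} is stated, so the theorem can be quoted verbatim for $X$, $Y$ and $X/Y$.

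Next I would identify the duals of $Y$ and $X/Y$ with a submodule and a quotient of $\widehat{X}$. Let $Y^\perp=\{\chi\in\widehat{X}:\chi|_Y=1\}$. Since $Y$ is $\Gamma$-invariant, $Y^\perp$ is a $\ZG$-submodule of $\widehat{X}$. Standard duality theory gives two group isomorphisms:
\begin{enumerate}
\item $\widehat{X/Y}\cong Y^\perp$, via composition with the quotient map $X\to X/Y$;
\item $\widehat{Y}\cong \widehat{X}/Y^\perp$, via restriction; this is surjective because characters of a closed subgroup extend.
\end{enumerate}
Both maps are natural, so they intertwine the $\Gamma$-actions. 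Hence they are $\ZG$-module isomorphisms, where $X/Y$ and $Y$ carry the induced actions. Algebraic entropy is an isomorphism invariant. The Bridge theorem then gives $h(X/Y)=h(Y^\perp)$ and $h(Y)=h(\widehat{X}/Y^\perp)$.

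Finally I would apply Corollary \ref{main cor}(1) with $\cM_1=Y^\perp\subseteq\cM_2=\widehat{X}$. This yields
$$h(X)=h(\widehat{X})=h(Y^\perp)+h(\widehat{X}/Y^\perp)=h(X/Y)+h(Y).$$
No finiteness hypotheses are needed, since part (1) of Corollary \ref{main cor} holds for arbitrary $\ZG$-modules.

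The only step needing real care is the bookkeeping in the first two paragraphs. One must check that the two duality isomorphisms are $\Gamma$-equivariant for the same left-module convention that the Bridge theorem uses. The inverse in $s^{-1}$ must be handled consistently, so that $Y^\perp$ is a submodule and not merely a subgroup. Apart from this, the argument is a direct combination of the Bridge theorem and Corollary \ref{main cor}(1).
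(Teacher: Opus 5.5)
Your proposal is correct and follows the paper's route: the paper gets this corollary by combining the Bridge theorem with Corollary \ref{main cor}(1) applied to $\widehat{X}$. Your $\ZG$-module identifications $\widehat{X/Y}\cong Y^\perp$ and $\widehat{Y}\cong \widehat{X}/Y^\perp$, together with the equivariance check, supply the Pontryagin-duality step that the paper leaves implicit.
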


This paper is organized as follows. We recall some background knowledge in Section 2. The notions of weak length and its bivariant version are introduced in Section 3 and their basic properties are discussed. In addition, we verify that all these properties are satisfied for the $\log$ function, locally finite length functions, and Sylvester module rank functions.   In Section 4, we introduce the notion of mean weak length and discuss its basic properties. The addition formula is established in Section 5.

Throughout this paper, $\Gamma$ will be a discrete group with the identity element $e$. For any set $S$, we denote by $\cF(S)$ the set of all nonempty finite subsets of $S$. $R$ is always a unital ring. All modules are assumed to be left modules.

\noindent{\it Acknowledgements.}
We are grateful to Hanfeng Li for his helpful insight on the notion of mean length and  Proposition \ref{product infinity}, and to Yongle Jiang for his careful reading. The second author is supported by NSFC grant 12271387. This work was partially supported by the Simons Foundation grant (award no. SFI-MPS-T-Institutes-00010825) and from State Treasury funds as part of a task commissioned by the Minister of Science and Higher Education under the project ``Organization of the Simons Semesters at the Banach Center-New Energies in 2026-2028"(agreement no. MNiSW/2025/DAP/491).

\section{Preliminaries}

\subsection{Group rings and length functions}
Given a unital ring $R$, the {\it group ring of $\Gamma$ with coefficients in $R$}, denoted by $R \Gamma$, consists of all finitely supported maps from $\Gamma$ to $R$. For each element $f$ in $\RG$, conventionally, we shall write $f$ as $\sum_{s \in \Gamma} f_s s$, where $f_s \in R$ for all $s \in \Gamma$ and $f_s=0$ for all except finitely many $s \in \Gamma$. 
The ring structure on $R \Gamma$ is given by
$$ \sum_{s\in \Gamma}f_ss+\sum_{s\in \Gamma}g_ss=\sum_{s\in \Gamma}(f_s+g_s)s, \mbox{ and } \big(\sum_{s\in \Gamma}f_s s\big)\big(\sum_{t\in \Gamma}g_tt\big)=\sum_{s, t\in \Gamma}f_sg_t(st).$$
We similarly have the product if one of $f$ and $g$ sits in $R^\Gamma$.

Following  V\'{a}mos \cite{V68}, the length function is defined as follows:
\begin{definition} \label{D-length}
A map $\rL $ on $R$-modules with the value in $\bR_{\geq 0}\cup \{+\infty\}$ is a  {\bf length function} if the following conditions are satisfied:
\begin{enumerate}
\item $\rL(0)=0$;
\item {\bf additivity}: $\rL(\cM_2)=\rL(\cM_1)+\rL(\cM_3)$ holds for any short exact sequence $0\rightarrow \cM_1\rightarrow \cM_2\rightarrow \cM_3\rightarrow 0$ of $R$-modules;
\item {\bf upper continuity}: for any $R$-module $\cM$, one has $\rL(\cM)=\sup_\cN \rL(\cN)$ for $\cN$ ranging over all finitely generated $R$-submodules of $\cM$.
\end{enumerate}
We say an $R$-module $\cM$ is {\bf locally $\rL$-finite} if $\rL(\cN)$ is finite for any finitely generated $R$-submodule $\cN$ of $\cM$. $\rL$ is called {\bf locally finite} if it takes finite values on every finitely generated $R$-submodule.
\end{definition}

\begin{example} \label{wleg}
    \begin{enumerate}
        \item Using composition series, a length function $\rL$ on $R$-modules can be defined as
        $$\rL(\cM):=\sup\{n: 0=\cM_0\subsetneq \cM_1 \cdots \subsetneq \cM_n=\cM\},$$
        where each $\cM_i$ is a $R$-submodule of $\cM$. The fact that $\rL$ is a length function follows from the Jordan-H{o}lder theorem\cite[Example 2.2]{LL19};
        \item Let $R$ be a Dedekind domain, i.e. $R$ is a integrally-closed Noetherian domain whose all nonzero prime ideals are maximal ideals.  Recall that the classical primary decomposition theorem for a finitely generated $R$-module $\sN$ asserts that it is isomorphic to 
        $$R^{d-1}\oplus I \oplus \oplus_{j=1}^k (R/P_j^{e_j})$$
        for some integers $d, k \geq 0, e_j\geq 1$, some ideal $I$, and prime ideals $P_j$ of $R$ ($R^{-1}$ and $R^0$ are understood as zero). For every $R$-module $\cM$ define
        $$\nu(\cM):=\sum_{j=1}^k e_j$$
        if $\cM$ is a finitely generated torsion and $\nu(\cM)=+\infty$ otherwise.
      Then $\nu$ is a length function \cite[Example 1.3]{SZ09}. 
      
      Now assume that $R$ is a principal ideal domain as a special case. Then the isomorphism type above can be written as
      $$R^d \oplus \oplus_{j=1}^k (R/P_j^{e_j})$$
      To capture the information of the free part, we can define
      $\rk_0(\cM):=d$
      for finitely generated $R$-module $\cM$ and then extend it to all $R$-modules via
      $$\rk(\cM):=\sup_{\cN \subseteq \cM} \rk_0(\cN)$$
      for $\cN$ ranging over all finitely generated $R$-submodules of $\cM$. It is a length function since it actually equals $\dim_Q(Q\otimes_R \cM)$ for the  field of fraction $Q$ of $R$. In particular, as $R=\bZ$, the notation $\rk$ recovers as the definition of rank for abelian groups. Note that $\nu(\bQ)=+\infty$ and $\rk(\bQ)=1$ as a $\bZ$-modules.\\
      \item Let $\Gamma$ be any discrete group and $R=\LG$ the left group von Neumann algebra of $\Gamma$. The canonical trace $\tr_\LG \colon R \to \mathbb{C}$ sending $T$ to $<T\delta_e, \delta_e>$ naturally extends to $M_n(R)$. The {\it von Neumann dimension} of any finitely generated projective $R$-module $\mathbb{P}$ is then defined as
      $$\dim_\LG'(\mathbb{P}):=\tr_\LG(P)$$
      where $P \in M_n(R)$ is some idempotent element such that $\mathbb{P} \cong (\LG)^nP$ as $R$-modules. The extended notion {\it von Neumann-L\"{u}ck dimension}  on all $R$-modules \cite[Definition 6.6]{L02} is defined via
      $$\dim_\LG(\mathbb{M}):=\sup_{\mathbb{P}} \dim_{\LG}'(\mathbb{P})$$
      where $\mathbb{P}$ ranges over all finitely generated projective $R$-submodules of $\mathbb{M}$.
       The deep properties of this function, as investigated by L\"{u}ck, imply that $\dim_\LG$ is a length function with $\dim_\LG(R)=1$ \cite[Theorem 6.7]{L02}.
    \end{enumerate}
\end{example}

\subsection{The Sylvester rank function and its extension}

The Sylvester rank functions were introduced by Malcolmson on either finitely presented modules or rectangular matrices \cite{M80}. We now recall its definition in the module-theoretic setting. 
\begin{definition}
 Let $R$ be a unital ring.   A map $\dim$ that assigns to each finitely presented $R$-module a value in $\bR_{\geq 0}$ is called a {\bf Sylvester $R$-module rank function} if the following conditions hold: 
 \begin{enumerate}
     \item $\dim(0)=0, \dim(R)=1$;
     \item $\dim(\cM_1\oplus \cM_2)=\dim(\cM_1)+\dim(\cM_2)$;
     \item $\dim(\cM_3)\leq \dim(\cM_2) \leq \dim(\cM_1)+\dim(\cM_3)$ holds for any short exact sequence 
     $$0\rightarrow \cM_1\rightarrow \cM_2\rightarrow \cM_3\rightarrow 0$$ of 
     finitely presented $R$-modules.
 \end{enumerate}
\end{definition}
Clearly each length function $\rL$ on $R$-modules with $\rL(R)=1$ restricts to a Sylvester module rank function. 
\begin{example} \label{Sylvester eg}
    \begin{enumerate}
        \item Let $R=\ZG$ be the group ring of a discrete group $\Gamma$. The {\it von Neumann-L\"{u}ck rank} on all $R$-modules \cite[Section 2.2]{LL18} is defined via
        $${\rm vrk}(\cM):=\dim_\LG (\LG\otimes_{\ZG}\cM).$$
        This rank ${\rm vrk}$ restricts to a Sylvester $R$-module rank function, but it is not a length function as $\Gamma$ is nonamenable.
        \item Let $R$ be a unital $C^\ast$-algebra with a tracial state $\tau$. The trace $\tau$  extends to $M_n(R)$ for all $n \in \bN$ by $\tau(A):=\sum_{i=1}^n \tau(A_{ii})$. For any finitely presented $R$-module $\cM$, define
        $$\dim(\cM)=n-\lim_{k \to +\infty} \tau(|A|^{1/k}),$$
        where $A \in M_{m, n}(R)$ is some matrix such that  $\cM \cong R^n/R^mA$. Then $\dim$ defines a Sylvester  $R$-module rank function \cite[Proposition 2.4]{JL21}.
    \end{enumerate}
\end{example}
To extend the domain of Sylvester module rank functions on all $R$-modules \cite[Definition 3.1]{L21}, its bivariant version is defined as follows:
\begin{definition}
    A map $\dim(\cdot|\cdot)$ on the collection of all pairs of $R$-modules $\cM_1 \subseteq \cM_2$ with the value in $\bR_{\geq 0}\cup \{+\infty\}$ is called a {\bf bivariant Sylvester $R$-module rank function} if the following hold:
\begin{enumerate}
    \item $\dim(\cdot|\cdot)$ is an isomorphism invariant;
    \item putting $\dim(\cM):=\dim(\cM|\cM)$, one has $\dim(0)=0$ and $\dim(R)=1$;
    \item {\bf direct sum}: for any $R$-modules $\cM_1 \subseteq \cM_2$ and $\cM_1' \subseteq \cM_2'$, one has
    $$\dim(\cM_1 \oplus \cM_1'|\cM_2\oplus \cM_2')=\dim(\cM_1|\cM_2)+\dim(\cM_1'|\cM_2');$$
    \item {\bf upper continuity}: $\dim(\cM_1|\cM_2)=\sup_{\cM_1'} \dim(\cM_1'|\cM_2)$ for $\cM_1'$ ranging over all finitely generated $R$-submodules  of $\cM_1$; 
    \item {\bf lower continuity}: if $\cM_1$ is finitely generated, then
    $$\dim(\cM_1|\cM_2)=\inf_{\cM_2'} \dim(\cM_1|\cM_2')$$
    for $\cM_2'$ ranging over all finitely generated $R$-submodules  of $\cM_2$ containing $\cM_1$;
    \item {\bf alternative additivity}: $\dim(\cM_2)=\dim(\cM_1|\cM_2)+\dim(\cM_2/\cM_1)$. 
\end{enumerate}
\end{definition}

\begin{example}
    \begin{enumerate}
        \item For  the {\it von Neumann-L\"{u}ck rank} ${\rm vrk}$  in Example \ref{Sylvester eg} (1), its bivariant version is defined as
        $${\rm vrk}(\cM_1|\cM_2) :=\dim_{\LG} (\Ima 1\otimes i)$$
        for any $\ZG$-modules $\cM_1 \subseteq \cM_2$ (see \cite[Definition 7.1]{LL19}). Here $i \colon \cM_1 \to \cM_2$ is the inclusion map and $1\otimes i \colon \LG\otimes \cM_1 \to \LG\otimes \cM_2$ is the induced natural map.
        According to \cite[Lemma 7.8]{LL19}, this construction yields a bivariant Sylvester $\ZG$-module rank function.
        \item Let $\Gamma$ be a sofic group with a sofic approximation sequence $\Sigma$. Consider a length function $\rL$ on $R$-modules for some unital ring $R$ such that $\rL(R)=1$.  Using the  sofic approximation sequence $\Sigma$ and the length function $\rL$, a class of bivariant Sylvester $\RG$-module rank functions can be constructed as detailed in \cite[Section 3.1]{LL19}. These functions are shown to have significant connections to dynamical invariants and $L^2$-invariants. 
    \end{enumerate}
\end{example}

The following establishes a close relation between a Sylvester module rank function and its bivariant version \cite[Theorem 3.3]{L21}.
\begin{theorem} \label{Sylvester extension}
    Every Sylvester $R$-module rank function $\dim(\cdot)$  extends uniquely to a bivariant Sylvester $R$-module rank function $\dim(\cdot|\cdot)$ in the sense that 
    $$\dim(\cM)=\dim(\cM|\cM)$$
    for any finitely presented $R$-module $\cM$.    
\end{theorem}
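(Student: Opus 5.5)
We follow the strategy of \cite[Theorem 3.3]{L21}. The bivariant function is first defined on pairs $\cM_1\subseteq\cM_2$ with $\cM_1$ finitely generated and $\cM_2$ finitely presented, then extended by the two continuity axioms. The construction is forced by the axioms, and this gives uniqueness directly. For such a pair we set
$$\dim(\cM_1|\cM_2):=\dim(\cM_2)-\dim(\cM_2/\cM_1)',$$
where the prime is explained next. Here $\cM_2/\cM_1$ is finitely presented, being a quotient of a finitely presented module by a finitely generated submodule, so $\dim$ is defined on it. If $\cM_2$ is only finitely generated, we instead use a lower-continuity infimum over finitely presented covers. Concretely, for an arbitrary pair $\cM_1\subseteq\cM_2$ with $\cM_1$ finitely generated, we take
$$\dim(\cM_1|\cM_2):=\inf_{\cP\to\cM_2}\bigl(\dim(\cP)-\dim(\cP/\cP_1)\bigr).$$
The infimum runs over finitely presented $\cP$ mapping to $\cM_2$ through which the inclusion of a finitely presented lift of $\cM_1$ factors, equivalently by a direct limit presentation $\cM_2=\varinjlim \cP_j$. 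For general $\cM_1$ we take the supremum over finitely generated submodules, as upper continuity dictates. Setting $\dim(\cM):=\dim(\cM|\cM)$, we must check that on finitely presented $\cM$ this recovers the original value. That reduces to monotonicity: $\dim(\cP)\ge\dim(\cM)$ whenever $\cP\to\cM$ is a surjection of finitely presented modules, which is the left inequality in axiom (3).

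Next we verify the axioms in order. Isomorphism invariance and $\dim(0)=0$, $\dim(R)=1$ are immediate. Upper and lower continuity hold essentially by construction, once we show that the inner expression is monotone in the right directions: increasing in $\cM_1$ and decreasing in $\cM_2$. Both monotonicities reduce to the Sylvester inequalities in (3) applied to finitely presented quotients. The direct sum axiom follows from additivity of $\dim$ on direct sums of finitely presented modules, using the fact that $\inf$ and $\sup$ split over products of directed systems.

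The main obstacle is alternative additivity, $\dim(\cM_2)=\dim(\cM_1|\cM_2)+\dim(\cM_2/\cM_1)$, for arbitrary modules. I would first prove it for $\cM_2$ finitely presented and $\cM_1$ finitely generated, where it is the defining identity. Then I would pass to $\cM_1$ arbitrary and $\cM_2$ finitely presented by writing $\cM_2/\cM_1$ as a direct limit of the finitely presented quotients $\cM_2/\cM_1'$. This requires the quotient value to behave as an infimum along such limits, a continuity property that must be established separately: it says $\dim(\cM)$ is the infimum of $\dim$ over finitely presented modules surjecting onto $\cM$, when $\cM$ is finitely generated. Finally I would treat general $\cM_2$ by taking suprema over finitely generated submodules and infima over presentations. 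The delicate point is interchanging these sup and inf operations. I would handle it by an $\varepsilon$-argument that exploits the subadditivity $\dim(\cM_2)\le\dim(\cM_1)+\dim(\cM_3)$ together with the monotonicities above.

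For uniqueness: any bivariant extension is determined on finitely presented pairs by alternative additivity, and then everywhere by the two continuity axioms. It therefore coincides with the function constructed above.
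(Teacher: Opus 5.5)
The paper gives no proof of this statement; it is quoted from \cite[Theorem 3.3]{L21}. Your outline is the natural route and is in the spirit of Li's construction: the difference formula on finitely generated submodules of finitely presented modules, an infimum over finitely presented approximations of the ambient, a supremum over finitely generated submodules, and uniqueness forced by the axioms. However, the step you flag as the main obstacle, alternative additivity for arbitrary $\cM_2$, is not carried out, and the tool you name for it cannot do the job.

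The upper bound $\dim(\cM_2)\le\dim(\cM_1|\cM_2)+\dim(\cM_2/\cM_1)$ involves the \emph{relative} value $\dim(\cM_1|\cM_2)$. Sylvester subadditivity only produces absolute dimensions of submodules (or of finitely presented covers of them). These dominate the relative values, often strictly: in Example~\ref{wlexample}(4) the relative value is $1$ and the absolute one is $2$. So subadditivity plus monotonicity cannot close the inequality.

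Two ingredients are missing. The first is relative three-term additivity
$$\dim(\cN|\cM)=\dim(\cN_1|\cM)+\dim(\cN/\cN_1|\cM/\cN_1)$$
for finitely generated $\cN_1\subseteq\cN$ inside an \emph{arbitrary} $\cM$. For finitely presented $\cM$ it is a tautology (a difference of differences). It passes to the limit along $\cM=\varinjlim\cP_j$, because $\cM/\cN_1=\varinjlim\cP_j/\cN_{1,j}$ and all three terms are finite and monotone in $j$.

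The second is the continuity of Theorem~\ref{deep S-property}(4): $\dim(\pi_{\cN_1}(\cN)|\cM_2/\cN_1)$ decreases to $\dim(\pi_{\cM_1}(\cN)|\cM_2/\cM_1)$ as $\cN_1$ runs over finitely generated submodules of $\cM_1$. In your set-up this follows from compactness of finitely presented modules. A map from a finitely presented $\cP$ into $\cM_2/\cM_1=\varinjlim_{\cN_1}\cM_2/\cN_1$ factors through some $\cM_2/\cN_1$, and the lift of $\pi_{\cM_1}(\cN)$ becomes compatible after enlarging $\cN_1$.

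With these, for finitely generated $\cN\subseteq\cM_2$ and suitable $\cN_1$,
$$\dim(\cN|\cM_2)\le\dim(\cN+\cN_1|\cM_2)=\dim(\cN_1|\cM_2)+\dim(\pi_{\cN_1}(\cN)|\cM_2/\cN_1)\le\dim(\cM_1|\cM_2)+\dim(\cM_2/\cM_1)+\varepsilon.$$
The reverse inequality comes from the same identity together with $\dim(\psi(\mathcal{A})|\mathcal{B}')\le\dim(\mathcal{A}|\mathcal{B})$ for homomorphisms $\psi\colon\mathcal{B}\to\mathcal{B}'$ and finitely generated $\mathcal{A}$, which holds by construction.

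Two smaller repairs are also needed. First, your check that the construction recovers $\dim$ on finitely presented modules asks for too little. Admissible data may have $\cP_1\subsetneq\cP$ even when $\cM_1=\cM_2$. What is needed is $\dim(\cP)-\dim(\cP/\cP_1)\ge\dim(\cM_2)-\dim(\cM_2/\phi(\cP_1))$ for every homomorphism $\phi\colon\cP\to\cM_2$ of finitely presented modules; this is also what makes the infimum attained at $\cP=\cM_2$ in your first additivity step. It follows from the exact sequence $\cP\to\mathcal{E}\to\cM_2/\phi(\cP_1)\to0$ with $\mathcal{E}=(\cP\oplus\cM_2)/\{(p,\phi(p)):p\in\cP_1\}\cong\cP/\cP_1\oplus\cM_2$. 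This uses axiom (3) for right-exact sequences, as in \cite{L21}; the short-exact version recorded in the paper does not apply directly, since the kernels involved need not be finitely presented.

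Second, for uniqueness, the continuity axioms only reduce to finitely \emph{generated} ambients, not finitely presented ones. You must add that alternative additivity for $\mathcal{K}\subseteq R^n$, together with upper continuity, forces $\dim(R^n/\mathcal{K})=\inf_{\mathcal{K}'}\dim(R^n/\mathcal{K}')$ over finitely generated $\mathcal{K}'\subseteq\mathcal{K}$. After that, alternative additivity determines every finitely generated pair.
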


We collect some important properties of bivariant Sylvester module rank functions in the following theorem, as established in \cite[Theorem 2.3]{JL21}.
\begin{theorem} \label{deep S-property}
Let $\dim(\cdot\,|\,\cdot)$ be a bivariant Sylvester $R$-module rank function. The following are true.
\begin{enumerate}
\item For any $R$-modules $\mathcal{M}_1 \subseteq \mathcal{M}_2 \subseteq \mathcal{M}_3$, one has
\[
\dim(\mathcal{M}_2\,|\,\mathcal{M}_3) = \dim(\mathcal{M}_1\,|\,\mathcal{M}_3) + \dim\big(\mathcal{M}_2/\mathcal{M}_1 \,\big|\, \mathcal{M}_3/\mathcal{M}_1\big).
\]

\item For any $R$-modules $\mathcal{M}_1, \mathcal{M}_2 \subseteq \mathcal{M}_3$, one has
\[
\dim(\mathcal{M}_1 + \mathcal{M}_2\,|\,\mathcal{M}_3) + \dim(\mathcal{M}_1 \cap \mathcal{M}_2\,|\,\mathcal{M}_3) \le \dim(\mathcal{M}_1\,|\,\mathcal{M}_3) + \dim(\mathcal{M}_2\,|\,\mathcal{M}_3).
\]

\item For any  $R$-modules $\mathcal{M}_1 \subseteq \mathcal{M}_2$ and any $R$-module homomorphism $\pi : \mathcal{M}_2 \to \mathcal{M}$, one has
\[
\dim\big(\pi(\mathcal{M}_1) \,\big|\, \pi(\mathcal{M}_2)\big) \le \dim(\mathcal{M}_1\,|\,\mathcal{M}_2).
\]

\item For any $R$-modules $\mathcal{M}, \mathcal{M}_1 \subseteq \mathcal{M}_2$ with $\mathcal{M}_1$ finitely generated, denoting by $\pi_{\mathcal{M}'}$ the homomorphism $\mathcal{M}_2 \to \mathcal{M}_2/\mathcal{M}'$ for every $R$-submodule $\mathcal{M}'$ of $\mathcal{M}_2$, one has
\[
\dim\big(\pi_{\mathcal{M}}(\mathcal{M}_1) \,\big|\, \mathcal{M}_2/\mathcal{M}\big)
= \inf_{\mathcal{M}'} \dim\big(\pi_{\mathcal{M}'}(\mathcal{M}_1) \,\big|\, \mathcal{M}_2/\mathcal{M}'\big).
\]
 for $\cM'$ ranging over all finitely generated R-submodules of $\cM$.
\end{enumerate}
\end{theorem}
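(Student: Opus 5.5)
The plan is to derive everything from alternative additivity, after reducing to the case where all values are finite. First I would record the elementary consequences of the axioms. Taking $\cM_1=\cM_2$ in alternative additivity and using $\dim(0)=0$ shows that $\dim(\cM_1|\cM_2)\le \dim(\cM_2)$ and $\dim(\cM_2/\cM_1)\le\dim(\cM_2)$. Direct sum and $\dim(R)=1$ give $\dim(R^n)=n$. Hence every finitely generated module $\cN$, being a quotient of some $R^n$, has $\dim(\cN)\le n<\infty$.

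For part (1), suppose first that $\cM_3$ is finitely generated, so that every term is finite. Alternative additivity applied to $\cM_2\subseteq\cM_3$, to $\cM_1\subseteq\cM_3$, and to $\cM_2/\cM_1\subseteq \cM_3/\cM_1$ gives
$$\dim(\cM_3)=\dim(\cM_2|\cM_3)+\dim(\cM_3/\cM_2),\qquad \dim(\cM_3)=\dim(\cM_1|\cM_3)+\dim(\cM_3/\cM_1),$$
$$\dim(\cM_3/\cM_1)=\dim(\cM_2/\cM_1|\cM_3/\cM_1)+\dim(\cM_3/\cM_2).$$
Here I use $(\cM_3/\cM_1)/(\cM_2/\cM_1)\cong\cM_3/\cM_2$ and isomorphism invariance. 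Eliminating $\dim(\cM_3)$, $\dim(\cM_3/\cM_1)$ and $\dim(\cM_3/\cM_2)$, all of which are finite, yields (1). For general modules I would use upper continuity in the first variable to reduce to $\cM_2$ finitely generated, replacing $\cM_1$ by $\cM_1\cap\cM_2'$. I would then use lower continuity to replace $\cM_3$ by finitely generated $\cM_3'\supseteq\cM_2$ and pass to the infimum.

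I expect this passage to the limit to be the main obstacle. The terms involving the possibly non-finitely-generated $\cM_1$, and the quotients $\cM_3'/\cM_1$, do not obviously commute with the sup/inf. Two things are needed here. One is monotonicity in each variable: increasing in the first variable and decreasing in the ambient. The other is a preliminary version of (3) and (4) for finitely generated pieces, which lets me approximate $\dim(\cM_2/\cM_1|\cM_3/\cM_1)$ by quotients by finitely generated submodules of $\cM_1$. So I would actually prove (1), (3) and (4) together by an induction-free bootstrap: first everything finitely generated, then the general case.

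For part (3), factor $\pi$ as a surjection $\cM_2\to\cM_2/K$ followed by an isomorphism, where $K=\ker\pi$. By isomorphism invariance it then suffices to prove $\dim((\cM_1+K)/K\,|\,\cM_2/K)\le\dim(\cM_1|\cM_2)$. In the finite case, alternative additivity rewrites the two sides as $\dim(\cM_2/K)-\dim(\cM_2/(\cM_1+K))$ and $\dim(\cM_2)-\dim(\cM_2/\cM_1)$. The inequality then becomes, via (1),
$$\dim((\cM_1+K)/\cM_1\,|\,\cM_2/\cM_1)\le\dim(K|\cM_2).$$
This is the same statement with the roles of $\cM_1$ and $K$ exchanged. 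I would close the argument by a direct-sum trick: $\cM_1\oplus K\to\cM_1+K$ is surjective, and direct sum plus additivity compare the two sides. Part (4) then follows from (3), which gives the inequality $\le$. For the reverse inequality, lower continuity applied to the finitely generated $\pi_{\cM}(\cM_1)$ inside $\cM_2/\cM$ shows that $\cM_2/\cM$ is a directed limit of $\cM_2/\cM'$.

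Finally, (2) follows from (1) and (3). Apply (1) to $\cM_1\cap\cM_2\subseteq\cM_1\subseteq\cM_3$ and to $\cM_2\subseteq\cM_1+\cM_2\subseteq\cM_3$. Then use $\cM_1/(\cM_1\cap\cM_2)\cong(\cM_1+\cM_2)/\cM_2$ together with (3) for the projection $\cM_3/(\cM_1\cap\cM_2)\to\cM_3/\cM_2$ to compare the two quotient terms. Adding the resulting relations gives the inequality.
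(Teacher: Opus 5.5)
The paper does not prove this theorem; it quotes it from [JL21, Theorem 2.3], so your argument has to stand on its own. Several pieces are correct:
\begin{itemize}
\item the finitely generated case of (1), obtained by eliminating the three finite quantities;
\item the reformulation of (3) in that case as $\dim(\cM_1+K|\cM_2)\le\dim(\cM_1|\cM_2)+\dim(K|\cM_2)$;
\item the inequality $\le$ in (4), which follows from (3);
\item the derivation of (2) from (1) and (3).
\end{itemize}
The two steps that carry the real content, however, are not established.

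\textbf{The reverse inequality in (4).} This is the main gap. Lower continuity only lets you replace the ambient $\cM_2/\cM$ by a finitely generated submodule $\pi_\cM(\cN')$, with $\cN'\supseteq\cM_1$ finitely generated. It says nothing about the surjections $\cM_2/\cM'\to\cM_2/\cM$. That $\cM_2/\cM$ is their directed limit is a fact about modules, not about $\dim$; continuity of $\dim$ along this system is exactly what (4) asserts.

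The missing idea is to work inside the finitely generated ambient $\cN'$, where everything is finite. Put $\mathcal{K}=\cN'\cap\cM$, so that $\pi_\cM(\cN')\cong\cN'/\mathcal{K}$. The finitely generated case of (1) gives
\[
\dim\big((\cM_1+\mathcal{K}')/\mathcal{K}'\,\big|\,\cN'/\mathcal{K}'\big)=\dim(\cM_1+\mathcal{K}'|\cN')-\dim(\mathcal{K}'|\cN')
\]
for every $\mathcal{K}'\subseteq\mathcal{K}$, including $\mathcal{K}'=\mathcal{K}$. By upper continuity, both terms on the right converge as finitely generated $\mathcal{K}'$ increase to $\mathcal{K}$. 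Monotonicity in the ambient, using $\cN'/\mathcal{K}'\subseteq\cM_2/\mathcal{K}'$, then finishes the argument.

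The inequality $\le$ in the general case of (1) needs precisely this half of (4). There you bound $\dim(\cN|\cM_3)$, for finitely generated $\cN\subseteq\cM_2$, by $\dim(\cM_1'|\cM_3)+\dim(\pi_{\cM_1'}(\cN)|\cM_3/\cM_1')$ with $\cM_1'\subseteq\cM_1$ finitely generated, and then take the infimum over $\cM_1'$. So your ``bootstrap'' for general (1) is also left open.

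\textbf{The direct-sum trick in (3).} As described, it does not produce the inequality, for three reasons:
\begin{itemize}
\item direct sum and additivity only give equalities;
\item the surjection $\cM_1\oplus K\to\cM_1+K$ relates the absolute dimensions $\dim(\cM_1)$, $\dim(K)$, $\dim(\cM_1+K)$, not the relative ones;
\item pushing $\cM_1\oplus K\subseteq\cM_2\oplus\cM_2$ forward along the addition map is itself an instance of (3).
\end{itemize}
The inequality has to come from the quotients. For finitely generated $\cM_2$, take the exact sequence
\[
0\to\cM_2/(\cM_1\cap K)\to\cM_2/\cM_1\oplus\cM_2/K\to\cM_2/(\cM_1+K)\to 0,
\]
with maps $x\mapsto(x+\cM_1,x+K)$ and $(x+\cM_1,y+K)\mapsto x-y+(\cM_1+K)$. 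Combine it with alternative additivity, direct sum, and $\dim(\mathcal{A}|\mathcal{B})\le\dim(\mathcal{A})$ for $\mathcal{A}\subseteq\mathcal{B}$; the last is where lower continuity enters. This yields
\[
\dim(\cM_2/\cM_1)+\dim(\cM_2/K)\le\dim(\cM_2/(\cM_1\cap K))+\dim(\cM_2/(\cM_1+K)).
\]
Together with $\dim(\cM_2/(\cM_1\cap K))\le\dim(\cM_2)$, this gives (3) in the finitely generated case, and it gives (2) there directly. The general case of (3) then follows by upper continuity in the first variable and lower continuity in the second.
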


\subsection{Amenable  groups}
A finite set $F \in \cF(\Gamma)$ is called {\bf $(K, \delta)$-invariant}  for some $K\in \cF(\Gamma)$ and $\delta > 0$ if the following holds
$$|\{s \in F: Ks \subseteq F\}|\geq (1-\delta)|F|.$$
 The group $\Gamma$ is called {\bf amenable} if it is $(K, \delta)$-invariant  for any $K\in \cF(\Gamma)$ and  any $\delta > 0$. 
For a function $\varphi \colon \cF(\Gamma) \to \bR$, we say that   {\bf $\varphi(F)$ converges to some $c\in \bR$ as $F\in \cF(\Gamma)$ gets more and more left invariant}, written as
$$\lim_F\varphi(F)=c,$$
if for any $\varepsilon>0$ there are some $K \in \cF(\Gamma)$ and $\delta > 0$ such that  
$$|\varphi(F)-c|<\varepsilon$$
 for all $(K, \delta)$-invariant  $F \in \cF(\Gamma)$. 

 \begin{example}
    The integral group $\bZ$ is clearly amenable, as the sequence $\{\{0, 1, \cdots, n\}\}_{n=1}^\infty$ is eventually $(K, \delta)$-invariant for any $K\in \cF(\bZ)$ and $\delta > 0$. Furthermore, any group of subexponential growth is amenable.  The class of amenable groups is closed under  taking subgroups, quotients, extensions, and directed unions.  The Grigorchuk group is a notable group of subexponential growth that sits  outside of the class generated by finite groups and abelian groups, i.e.  the class of elementary amenable groups.  The Basilica group is a notable amenable group that sits outside the class generated by groups of subexponential growth \cite{GZ02, BV05}.  
 \end{example}

The following strong version of Ornstein-Weiss lemma is mainly due to Gromov \cite[1.3.1]{G99m}\cite[Theorem 4.38, Theorem 4.48]{KLb} \cite[Theorem 9.4.1]{C15}.
\begin{lemma} \label{OW}
Let  $\varphi \colon \cF(\Gamma)  \to \bR$ be a function satisfying\\
\begin{enumerate}
    \item $\varphi(Fs)=\varphi(F)$ for all $F \in \cF(\Gamma)$ and $s \in \Gamma$; \\
    \item $\varphi(F_1\cup F_2) \leq \varphi(F_1) + \varphi(F_2)$ for all $F_1, F_2 \in \cF(\Gamma)$.\\
\end{enumerate}
Then the limit $\lim_F\varphi(F)/|F|$ exists. Moreover if $\varphi$ is \textbf{strongly subadditive} in the sense that
$$\varphi(F_1\cup F_2) \leq \varphi(F_1) + \varphi(F_2)-\varphi(F_1\cap F_2)$$
for all $F_1, F_2 \in \cF(\Gamma)$, we have
$$\lim_F\frac{\varphi(F)}{|F|}=\inf_{F \in \cF(\Gamma)} \frac{\varphi(F)}{|F|}.$$
\end{lemma}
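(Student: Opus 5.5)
The plan is to treat the two assertions separately. The first (existence of $\lim_F\varphi(F)/|F|$ under mere subadditivity and right invariance) is the classical Ornstein--Weiss lemma. I would not reprove it from scratch but follow the quasi-tiling argument of \cite[Theorem 4.38]{KLb}. Put $\overline{c}=\limsup_F\varphi(F)/|F|$ and $\underline{c}=\liminf_F\varphi(F)/|F|$.

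First, by invariance and subadditivity, $\varphi(A)\le |A|\varphi(\{e\})$ for every $A\in\cF(\Gamma)$, so all quantities are controlled by a constant times cardinality. Next, given $\varepsilon>0$, choose tiles $T_1,\dots,T_m$ that are very invariant and have $\varphi(T_j)/|T_j|<\underline{c}+\varepsilon$. The Ornstein--Weiss quasi-tiling theorem then lets every sufficiently invariant $F$ be $\varepsilon$-covered by $\varepsilon$-disjoint right translates $T_jc$. Finally, subadditivity bounds $\varphi(F)$ by the sum over the tiles plus $\varphi(\{e\})$ times the number of uncovered points. This gives $\overline{c}\le\underline{c}+O(\varepsilon)$. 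The only delicate point is that subadditivity is used on the $\varepsilon$-disjoint pieces after shrinking each tile to a disjoint core. This is exactly where $\varphi(A)\le|A|\varphi(\{e\})$ absorbs the error.

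For the strongly subadditive case I would give a direct argument that avoids quasi-tilings. Set $h=\inf_{F}\varphi(F)/|F|$, so the inequality $\varphi(F)/|F|\ge h$ is trivial and only the upper bound needs proof. Extend $\varphi$ by $\varphi(\emptyset)=0$. Subadditivity on disjoint sets shows that strong subadditivity then remains valid when $F_1\cap F_2=\emptyset$. Fix a finite $F$, enumerate it as $x_1,\dots,x_n$, and define increments $\Delta(x_i\,|\,A)=\varphi(A\cup\{x_i\})-\varphi(A)$. Strong subadditivity says precisely that $\Delta(x\,|\,A)\le\Delta(x\,|\,B)$ whenever $B\subseteq A$.

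From this monotonicity I would derive a Shearer-type inequality. If $\{A_i\}$ is a finite family of subsets of $F$ covering each point of $F$ exactly $k$ times, then
$$k\,\varphi(F)\le\sum_i\varphi(A_i).$$
To see this, write $\varphi(F)=\sum_j\Delta(x_j\,|\,\{x_1,\dots,x_{j-1}\})$ and each $\varphi(A_i)$ similarly along the induced order. Each increment computed inside $A_i$ conditions on a smaller set, so it dominates the corresponding global increment.

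Now fix $\varepsilon>0$ and choose $F_0$ with $\varphi(F_0)/|F_0|<h+\varepsilon$. Apply the inequality to the family $\{F_0s\cap F: s\in F_0^{-1}F\}$, in which every $x\in F$ is covered exactly $|F_0|$ times. This gives
$$\varphi(F)\le\frac1{|F_0|}\sum_{s\in F_0^{-1}F}\varphi(F_0s\cap F).$$
I then split the sum into two kinds of terms.

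For $s$ with $F_0s\subseteq F$ (at most $|F|$ of them), invariance gives $\varphi(F_0s\cap F)=\varphi(F_0)$. For the remaining $s$, the set $F_0s$ meets both $F$ and its complement, and there are at most $\delta'|F|$ of them once $F$ is $(F_0^{-1}F_0,\delta)$-invariant with $\delta$ small. Each such term is bounded by $|F_0|\max(\varphi(\{e\}),0)$.

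The first kind of terms contributes at most $|F|\,(h+\varepsilon)$ after dividing by $|F_0|$, provided $h+\varepsilon\ge 0$. If $h+\varepsilon<0$, I instead count the interior $s$ as at least $(1-\delta'')|F|$ of them, and the boundary terms still add only $O(\delta')$. Hence $\varphi(F)/|F|\le h+2\varepsilon$ for all sufficiently invariant $F$.

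I expect the main obstacle to be the bookkeeping in the Shearer inequality, together with the sign issues: the boundary terms need only an upper bound, but the interior count must be handled according to the sign of $h$. In the general subadditive part, the real difficulty is entirely the quasi-tiling theorem, which I would take from \cite{KLb}.
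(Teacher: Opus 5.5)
Your proposal is correct and follows the standard arguments behind the references the paper cites in place of its own proof: quasi-tiling for existence of the limit, and a Shearer-type averaging over the sets $F_0s\cap F$ for the infimum rule, where your `exactly $k$ times' version of Shearer's inequality avoids any monotonicity assumption. Three minor corrections: taking $F_1=F_2$ in (2) gives $\varphi\ge 0$, so your sign case analysis is vacuous; in the quasi-tiling step you should not shrink tiles to disjoint cores, since bounding $\varphi$ of a core by $\varphi$ of its tile needs monotonicity, which is not assumed, so instead apply (2) directly to the overlapping tiles $T_jc\subseteq F$ and use $\varepsilon$-disjointness only to get $\sum_{j,c}|T_jc|\le|F|/(1-\varepsilon)$; and with the paper's convention $Ks\subseteq F$, your boundary count needs invariance under $K=F_0F_0^{-1}$ rather than $F_0^{-1}F_0$.
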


Throughout the remainder of this paper, the symbol $\Gamma$ will denote a discrete amenable group, unless stated otherwise.

\section{Weak length function}

Let $R$ be a unital ring.
 Denote by $\cP_R$ the collection of pairs $(A, \cM)$ for any $R$-module $\cM$ and any nonempty subset $A$ of $\cM$. Similarly denote by $\cF_R$ the pairs $(A, \cM)$ such that $A$ is a finite set.
 For any subset $A$ of an $R$-module $\cM$ let $<A>_R$ denote the $R$-submodule generated by $A$.  For another subset $B \subseteq \cM$ and $C \subseteq R$, we write  
 $$A+B:= \{a+b: a \in A, b \in B\}, \mbox{\ and \ } CA:=\{ra:  r \in C \ {\rm and \ } a \in A\}.$$
 A simple observation is that if $0 \in A \cap B$, then $A+B$ contains $A\cup B$. For any abelian group $G$ write $\cF(G)$ for the nonempty finite subsets of $G$ and $\cF^0(G)$ for the finite subsets of $G$ containing zero element respectively.

 \begin{definition} \label{weak length def}
 An {\bf $R$-weak length}  is a map $\ell \colon \cP_R \to \bR_{\geq0} \cup \{+\infty\} $  sending $(A, \cM)$ to $\ell(A|\cM)$ such that the following properties hold:
\begin{enumerate}
    \item {\bf regularity}: $\ell(\{0\}|\cM)=0$ for any $R$-module $\cM$;
    \item {\bf  product property}:  $\ell(A\times B|\cM\times \cN)=\ell(A|\cM)+\ell(B|\cN)$,  for  any  $(A,\cM),  (B, \cN) \in \cP_R$;
    \item {\bf quotient property}: for any $R$-module homomorphism $\varphi  \colon \cM \to \cN$, one has $\ell(A|\cM      
    ) \geq \ell(\varphi(A)|\cN)$ for any $(A, \cM) \in \cP_R$;
    \item {\bf upper continuity:} if $A_n$ increases to $A$ for some $R$-module $\cM$ containing all $A_n$ and $A$, then $\ell(A_n|\cM)$ increases to $\ell(A|\cM)$. In particular, $\ell$ is monotone for increasing subsets in the sense that whenever $A_1 \subseteq A_2\subseteq \cM$ one has $\ell(A_1|\cM) \leq \ell(A_2|\cM)$.
\end{enumerate}
We say $\ell$ is {\bf locally finite} if $\ell(A|\cM) < +\infty$ for any $(A, \cM)  \in \cF_R$; $\ell$ has the {\bf strong quotient property} if for  any $R$-module homomorphism $\varphi \colon \cM \to \cN$, one has
$$\ell(A+ B|\cM) \geq \ell(\varphi(A)|\cN)+\ell(B|\cM)$$
for any $ A \in  \cF^0(\cM)$ and $B \in \cF^0(\ker \varphi)$. We say $\ell$ is {\bf independent of ambient modules} if
$$\ell(A|\cM_1)=\ell(A|\cM_2)$$
for any $A \subseteq \cM_1 \subseteq \cM_2$ of $R$-modules $\cM_1$ and $\cM_2$. 
When $\ell$ is partially defined on a subcollection $\cC_R \subseteq \cP_R$ that is closed under taking product, quotient and direct union operations,  we would say $\ell$ is a weak length function on $\cC_R$.
 \end{definition} 

 \begin{remark}
In \cite[Section 1]{SZ09}, Scale-Zanardo used some weaker axioms to introduce the notion of  subadditive invariants on modules. In contrast our approach to weak length functions is to relax the domain in order to facilitate the definition of algebraic entropy and validity of addition formula.  
 \end{remark}

\begin{example} \label{wlexample}
   \begin{enumerate}
       \item A typical example of locally finite $\bZ$-weak length functions we bear in mind is 
   $$\ell(A|\cM):=\log|A|.$$
    Note that  $\log |<A>_\bZ|=+\infty$ whenever $A$ contains an element of infinite order. Such an $\ell$ satisfies the strong quotient property. In contrast, in the setting of strong quotient property, the inequality
    $$\ell(A\cup B|\cM) \geq \ell(\varphi(A)|\cN) +\ell(B|\cM)$$
    does not hold. For example, let $\cM=\cN=\bZ^2$, and let  $\varphi \colon \cM \to \cN$ be the projection onto the first coordinate. Consider the set $A=\{(0,0), (0,1), (1,0)\}$ and $ B=\{(0,0), (0,1)\}$. We observe that $B \in \cF^0(\ker \varphi)$ and 
    $$\ell(A\cup B|\cM)=\log 3 < \log 2 +\log 2=\log|\varphi(A)|+\log|B|.$$
   \item  Let  $\rL$ be a length function on $R$-modules. For any subset $A \subseteq \cM$ of an  $R$-module $\cM$, define
   $$\ell(A|\cM):=\rL(<A>_R),$$
   Then it is direct to verify that such an  $\ell$ is a weak length function on $\cP_R$. Moreover, $\ell$ is locally finite on the collection of subsets of locally $\rL$-finite $R$-modules.
   
    \item For every $k \in \bN$  and an abelian group $\cM$ write
    $T_k(\cM):=\{x \in \cM: kx=0\}$
    for $k$-torisons of $\cM$. Consider 
    $$\ell_k(A|\cM):= \log |T_k(\cM)\cap A|$$
    for any $A \subseteq \cM$.
    Then $\ell_k$ is a weak length function on $\cP_\bZ$ but not a length function on $\bZ$-modules!

  \item Consider $R=\bZ F_2$ for the free group $F_2$ with two canonical generators $a$ and $b$. Using the notion of sofic mean rank $\mrk_\Sigma$ in [Section 7, LL19], for any subset 
  $A \subseteq \cM$ of an $R$-module $\cM$,  define
  $$\ell(A|\cM):=\mrk_\Sigma(<A>_R|\cM).$$
In light of [Example 6.3, LL19], for $A =\{a-1, b-1\} \subseteq \cM_1=R(a-1)\oplus R(b-1)$ and $\cM_2=R$,   we can compute that $\ell(A|\cM_1)=\mrk_\Sigma (\cM_1)=2$; in contrast,
$$\ell(A|\cM_2)=\mrk_\Sigma(\cM_1|\cM_2)=\mrk_\Sigma(\cM_2)-\mrk_\Sigma(\cM_2/\cM_1)=1-0=1.$$
Thus, $\ell$ do depend on the ambient modules!
  
    \item For any subset $A$ of an abelian group, consider ${\rm gen}(A)$ as the  minimal number of generators for the abelian group $<A>_\bZ$. By the structure theorem of finitely generated abelian groups, $\gen(A)$ equals the sum of the rank of free part of $<A>_\bZ$ and the number of invariant factors of $<A>_\bZ$.  The function gen is monotone. However, it does not define a weak length 
    $$\ell(A|\cM):={\rm gen}(A)$$
    as it does not satisfy the product property. This is illustrated by the following example: $$\gen(\bZ/2\bZ\oplus \bZ/3\bZ)=\gen(\bZ/6\bZ)=1 <2 =\gen(\bZ/2\bZ)+ \gen(\bZ/3\bZ).$$

    \item Let $R$ be a valuation domain.  For any subset $A$ of an $R$-module $\cM$, motivated by Malcev rank,  we can consider
     $${\rm Mr}(A|\cM):=\sup_{B \in \cF(<A>_R)} \gen(B).$$
 In \cite[Example 1.7]{SZ09} it is shown that ${\rm Mr}$ defines a weak length on $\cP_R$.
    
   \end{enumerate}
\end{example}
There are more interesting examples in \cite{SZ09}, which can be alternatively referred to as weak length functions. The following proposition collects some basic properties of weak length functions.

\begin{proposition} \label{basic prop}
An  $R$-weak length function $\ell$ satisfies the following:
\begin{enumerate}
    \item $\ell(A+B|\cM) \leq \ell(A|\cM)+\ell(B|\cM)$, for any $A, B\subseteq \cM$ of an $R$-module $\cM$; 
    \item if $A \subseteq \cM_1 \subseteq \cM_2$, then $\ell(A|\cM_1) \geq \ell(A|\cM_2)$;
    \item if $0 \in A\cap B$, then $\ell((A\cup B|\cM) \leq \ell(A+B|\cM)$;
    \item if $\varphi \colon \cM \to \cN$ is an isomorphism of $R$-modules, then $$\ell(A|\cM)=\ell(\varphi(A)|\cN)$$ for any nonempty $A \subseteq \cM$; This means that $\ell$ has the invariance property.
\end{enumerate}   
\end{proposition}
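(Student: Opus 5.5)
Each of the four items follows by applying one of the axioms of Definition \ref{weak length def} to a suitable homomorphism, product, or inclusion. The only step that needs care is item (1), where the product property and the quotient property have to be combined.

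For (1), consider the $R$-module homomorphism $\sigma \colon \cM\times\cM \to \cM$, $(x,y)\mapsto x+y$. Then $\sigma(A\times B)=A+B$. The quotient property gives $\ell(A+B|\cM)\le \ell(A\times B|\cM\times\cM)$, and the product property rewrites the right-hand side as $\ell(A|\cM)+\ell(B|\cM)$. Nothing else is needed, and the argument is unaffected if some values are $+\infty$.

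For (2), apply the quotient property to the inclusion homomorphism $\iota\colon \cM_1\to\cM_2$. Since $\iota(A)=A$, this yields $\ell(A|\cM_1)\ge \ell(A|\cM_2)$ directly.

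For (3), the hypothesis $0\in A\cap B$ gives $A\cup B\subseteq A+B$, as already observed in the text. The monotonicity clause of the upper continuity axiom then gives $\ell(A\cup B|\cM)\le \ell(A+B|\cM)$. If one prefers to derive monotonicity explicitly from upper continuity, take the sequence $A_1=A_1'$, $A_n=A_2'$ for $n\ge 2$, which increases to $A_2'$.

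For (4), apply the quotient property to $\varphi$ to get $\ell(A|\cM)\ge\ell(\varphi(A)|\cN)$. Then apply it to $\varphi^{-1}$ and the set $\varphi(A)$ to get $\ell(\varphi(A)|\cN)\ge \ell(\varphi^{-1}(\varphi(A))|\cM)=\ell(A|\cM)$. The two inequalities give equality.

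The main point of care is in (1): one must recognise $A+B$ as the image of $A\times B$ under addition. This only works because addition is an $R$-module map $\cM\times\cM\to\cM$, which is what lets the quotient property apply.
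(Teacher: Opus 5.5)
Your proposal is correct and matches the paper's proof step for step. The paper uses the same four arguments: the addition map $\cM\times\cM\to\cM$ together with the product and quotient properties for (1), the quotient property applied to the inclusion for (2), monotonicity via $A\cup B\subseteq A+B$ for (3), and the quotient property applied to both $\varphi$ and $\varphi^{-1}$ for (4).
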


\begin{proof}
    By the product and quotient property of the weak length function, (1) follows from the homomorphism $ \cM \times \cM \to \cM$ sending $(a, b)$ to $a+b$. 
    
     (2) follows from the quotient property of the weak length function. (3) follows from the monotonicity of the weak length function. 
    (4) follows from
    $$\ell(A|\cM) \geq \ell(\varphi(A)|\cN) \geq \ell(\varphi^{-1}(\varphi(A)|\cM)=\ell(A|\cM).$$
\end{proof}

\begin{proposition} \label{length induces sq}
    Let $R$ be a unital ring and $\rL$ a locally-finite length function on $R$-modules.  Consider the induced  weak length defined by
    $$\ell(A|\cM):=\rL(<A>_R).$$ 
Then $\ell$ has the strong quotient property.  
\end{proposition}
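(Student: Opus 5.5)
We need to show, for $\varphi\colon\cM\to\cN$, $A\in\cF^0(\cM)$ and $B\in\cF^0(\ker\varphi)$, that
$$\rL(<A+B>_R)\geq \rL(<\varphi(A)>_R)+\rL(<B>_R).$$
The plan is to compare submodules and then apply additivity of $\rL$ to one short exact sequence.

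First, since $0\in A\cap B$, the set $A+B$ contains both $A$ and $B$. Conversely, every element of $A+B$ lies in $<A>_R+<B>_R$. Hence
$$<A+B>_R=<A>_R+<B>_R=:\cM'.$$
This is a finitely generated submodule of $\cM$, so $\rL(\cM')<\infty$ by local finiteness.

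Next, restrict $\varphi$ to $\cM'$. Because $B\subseteq\ker\varphi$, we get
$$\varphi(\cM')=\varphi(<A>_R)=<\varphi(A)>_R.$$
Now consider the short exact sequence
$$0\to \ker(\varphi|_{\cM'})\to \cM'\to <\varphi(A)>_R\to 0 .$$
Additivity of $\rL$ gives
$$\rL(\cM')=\rL(\ker(\varphi|_{\cM'}))+\rL(<\varphi(A)>_R).$$

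Finally, $<B>_R\subseteq \ker\varphi\cap\cM'=\ker(\varphi|_{\cM'})$. Length functions are monotone on submodules: by additivity, $\rL(\cN_2)=\rL(\cN_1)+\rL(\cN_2/\cN_1)\geq \rL(\cN_1)$ whenever $\cN_1\subseteq\cN_2$. Therefore
$$\rL(\ker(\varphi|_{\cM'}))\geq \rL(<B>_R).$$
Combining this with the previous equality gives the desired inequality.

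There is no genuine obstacle. The only point needing care is the identification $<A+B>_R=<A>_R+<B>_R$, which is exactly where the hypothesis $0\in A\cap B$ is used. Local finiteness is only needed so that the additivity equation involves finite numbers, and even infinite values would cause no harm.
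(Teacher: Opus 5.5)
Your proof is correct and is essentially the paper's argument: both apply additivity of $\rL$ to the short exact sequence $0\to\ker\varphi\cap<A\cup B>_R\to<A\cup B>_R\to<\varphi(A)>_R\to 0$, then use monotonicity to bound the kernel term below by $\rL(<B>_R)$. The only cosmetic difference is that you observe $<A+B>_R=<A>_R+<B>_R$ exactly, whereas the paper only uses the inequality $\ell(A+B|\cM)\ge\ell(A\cup B|\cM)$.
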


\begin{proof}
   Let $\varphi \colon \cM \to \cN$ be an $R$-module homomorphism. Since $\rL$ is additive, for any $A \in \cF^0(\cM), B \in \cF^0(\ker \varphi)$ we have
    \begin{align*}
        \ell(A+B|\cM)& \geq \ell(A\cup B|\cM)\\
        & =\rL(<\varphi(A)>_R) +\rL(\ker \varphi \cap <A, B>_R) \\
        &\geq \rL(<\varphi(A)>_R) +\rL(<B>_R) \\
        &=\ell(\varphi(A)|\cN) +\ell(B|\cM). 
    \end{align*}
$$$$
\end{proof}

\begin{question}
    Under what conditions is a weak length $\ell$ independent of ambient modules?
\end{question}

Now we introduce a bivariant version of weak length functions motivated from \cite[Section 2.2]{DFGS23}.

\begin{definition}
A  {\bf bivariant $R$-weak length function} $\ell(\cdot, \cdot|\cdot)$ is an $\bR_{\geq 0}\cup \{\infty\}$-valued map sending $(A, B, \cM)$ for all nonempty subsets $A, B \subseteq \cM$ of any $R$-module $\cM$ to $\ell(A,B|\cM)$, satisfying  the following properties:
\begin{enumerate}
    \item {\bf regularity:} $\ell(0, 0|\cM)=0$ for any $R$-module $\cM$;
    \item {\bf direct product: } $\ell(A\times A', B\times B'|\cM\times \cM')=\ell(A, B|\cM)+\ell(A', B'|\cM')$  for  any  $A, B \subseteq \cM$ and $A', B' \subseteq \cM'$ for some $R$-modules $\cM $ and $\cM'$;
    \item {\bf quotient property}: for any $R$-module homomorphism  $\varphi  \colon \cM \to \cN$, and nonempty $A, B \subseteq \cM$, one has 
    $$\ell(\varphi(A), \varphi(B)|\cN) \leq \ell(A, B|\cM);$$
    \item {\bf continuity property}: if $A_n \subseteq \cM$ increases to $A$, then $\ell(A_n,B|\cM)$ increases to $\ell(A,B|\cM)$ for any nonempty $B\subseteq \cM$; if $A \subseteq \cM$ is  finite, and $B_n \subseteq \cM$ increases to $B$, then $\ell(A,B_n|\cM)$ decreases to $\ell(A,B|\cM)$.
\end{enumerate}
\end{definition}
In particular, $\ell(\cdot, 0|\cdot)$ is an $R$-weak length.

Some easy consequences of bivariant $R$-weak lengths are as follows:
\begin{proposition} \label{basic bivariant}
Any bivariant $R$-weak length function $\ell(\cdot, \cdot|\cdot)$ satisfies the following properties:
\begin{enumerate}
    \item for any nonempty $A,A',  B, B' \subseteq \cM$ of an $R$-module $\cM$, one has
    $$\ell(A+A', B+B'|\cM) \leq \ell(A,B|\cM)+\ell(A', B'|\cM).$$
    \item if $0 \in A\cap A'$, then $\ell(A\cup A', B|\cM) \leq \ell(A+A', B|\cM)$;
    \item if $\varphi \colon \cM \to \cN$ is an isomorphism of $R$-modules, then $$\ell(A, B|\cM)=\ell(\varphi(A), \varphi(B)|\cN)$$ 
for any nonempty $A, B \subseteq \cM$. This means that $\ell(\cdot, \cdot|\cdot)$ has the invariance property;
\end{enumerate}   
\end{proposition}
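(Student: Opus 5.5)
The three items can be handled exactly as in Proposition \ref{basic prop}, with the bivariant axioms used in place of the univariant ones.

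For (1), we use the direct product property together with the quotient property. Consider the $R$-module homomorphism $\sigma \colon \cM\times\cM\to\cM$ given by $\sigma(a,a')=a+a'$. Then $\sigma(A\times A')=A+A'$ and $\sigma(B\times B')=B+B'$. The quotient property gives
$$\ell(A+A', B+B'|\cM)=\ell(\sigma(A\times A'),\sigma(B\times B')|\cM)\leq \ell(A\times A', B\times B'|\cM\times\cM).$$
By the direct product axiom, the right-hand side equals $\ell(A,B|\cM)+\ell(A',B'|\cM)$, which proves (1).

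For (2), we first extract monotonicity in the first variable from the continuity property. Given $A_1\subseteq A_2$, take the sequence $A_1, A_2, A_2, \dots$, which increases to $A_2$. The continuity axiom says that $\ell(\cdot,B|\cM)$ is increasing along this sequence, so $\ell(A_1,B|\cM)\le \ell(A_2,B|\cM)$. Now suppose $0\in A\cap A'$. Then $A\cup A'\subseteq A+A'$, since $a=a+0$ and $a'=0+a'$. Monotonicity then gives (2).

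For (3), we apply the quotient property twice. Applied to $\varphi$ it gives $\ell(\varphi(A),\varphi(B)|\cN)\le \ell(A,B|\cM)$. Applied to $\varphi^{-1}$ it gives
$$\ell(A,B|\cM)=\ell(\varphi^{-1}\varphi(A),\varphi^{-1}\varphi(B)|\cM)\le \ell(\varphi(A),\varphi(B)|\cN),$$
and the two inequalities together yield equality.

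None of these steps is a real obstacle. The only point that needs care is in (2): continuity is stated only for increasing sequences, so monotonicity has to be derived from it through the eventually constant sequence used above.
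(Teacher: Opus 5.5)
Your proof is correct and follows the paper's argument. Item (1) uses the direct product and quotient properties applied to the addition map, (2) uses monotonicity extracted from the continuity property, and (3) applies the quotient property to $\varphi$ and $\varphi^{-1}$. Your addition map $\cM\times\cM\to\cM$, $(a,a')\mapsto a+a'$, is also a cleaner statement of the homomorphism that the paper writes, somewhat garbled, as $(\cM\times\cM)\times(\cM\times\cM)\to\cM\times\cM$.
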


\begin{proof}
    By the product and quotient property, (1) follows from the homomorphism 
    $$\varphi \colon (\cM \times \cM)\times (\cM \times \cM) \to \cM\times \cM$$ sending $(\xi,\xi')$ to $\xi+\xi'$.
    (2) follows from the  continuity property. 
    (3) follows from
    $$\ell(A, B|\cM) \geq \ell(\varphi(A), \varphi(B)|\cN) \geq \ell(\varphi^{-1}(\varphi(A)), \varphi^{-1}(\varphi(B))|\cM)=\ell(A, B|\cM).$$
\end{proof}

Now we introduce a key upgrading condition connecting weak lengths and their bivariant versions.
\begin{definition} \label{upgrading def}
    A bivariant $R$-weak length function $\ell(\cdot, \cdot|\cdot)$ is called an  {\bf upgrading of an $R$-weak length function $\ell$}, if it satisfies  
    $$\ell(A, 0|\cM)=\ell(A|\cM)$$
    for any $(A, \cM) \in \cP_R$. We say $\ell(\cdot, \cdot|\cdot)$ is a {\bf proper upgrading}
    if it satisfies the following:
    \begin{enumerate}
        \item[i).] $\ell(A, C|\cM) \leq \ell(A, B|\cM) +\ell(B, C|\cM)$ for any $A, B, C \in \cF(\cM)$; in particular, one has
        $$\ell(A|\cM) \leq \ell(A, B|\cM) +\ell(B|\cM).$$
       \item[ii).]  for any $R$-module homomorphism $\varphi \colon \cM \to \cN$  and $A \in \cF(\cM)$, then for any $\varepsilon > 0$, there exists $B \in \cF(\ker \varphi)$ such that
        $$\ell(A, B|\cM) \leq \ell(\varphi(A)|\varphi(\cM)) +\varepsilon;$$ 
    \end{enumerate}
\end{definition}
in particular,  it implies that $\ell(\varphi(A)|\cN)\leq \ell(A, B|\cM) \leq \ell(\varphi(A)|\varphi(\cM))+\varepsilon$.

An important enquiry is to ask:
\begin{question}
    When can an $R$-weak length function be upgraded to a proper bivariant $R$-weak length function?
\end{question}

\begin{example} \label{grading eg}
    \begin{enumerate}
        \item Consider the weak length $\ell$ induced from a length function $\rL$ as in Example \ref{wlexample}(2). Clearly such a weak length $\ell$ is independent of ambient modules. For any $A, B \subseteq \cM$ of an $R$-module $\cM$, write $\pi_B$  for the quotient map $\cM \to \cM/<B>_R$. Define $\ell(\cdot, \cdot|\cdot)$ via
$$\ell(A, B|\cM):=\ell(\pi_B(A)|\pi_B(\cM))=\rL(<\pi_B(A)>_R).$$
By the additivity of $\rL$, we have
$$\ell(A\cup B|\cM)=\ell(A, B|\cM)+\ell(B|\cM).$$
\item For a Sylvester $R$-module rank function $\dim(\cdot)$, by Theorem \ref{Sylvester extension}, it uniquely extends to a bivariant Sylvester $R$-module rank function $\dim(\cdot|\cdot)$.
For any $A, B \subseteq \cM$ of an $R$-module $\cM$, define 
$$\ell(A, B|\cM):=\dim(\pi_B(<A>_R)|\pi_B(\cM))$$
and
$$\ell(A|\cM):=\ell(A, 0|\cM)=\dim(<A>_R|\cM).$$
In light of Example \ref{wlexample} (4), we see that such a function $\ell$ can be dependent on ambient modules.
By the alternative addition formula of $\dim(\cdot|\cdot)$, we also have
$$\ell(A\cup B|\cM)=\ell(A, B|\cM)+\ell(B|\cM).$$
It follows that such an $\ell$ has the strong quotient property.
    \end{enumerate}
    
\end{example}

\begin{proposition} \label{length upgrading}
    Let $\ell(\cdot, \cdot|\cdot)$ be a function defined above from  a locally-finite length function $
    \rL$ or a bivariant Sylvester module rank function on $R$-modules. Then $\ell(\cdot, \cdot|\cdot)$ is a proper bivariant $R$-weak length function that upgrades $\ell$. Precisely the following hold:
\begin{enumerate}
    \item  for any $A, B \subseteq \cM$ of an $R$-module $\cM$ and $C \in \cF(\cM)$, we have
    $$\ell(A, C|\cM) \leq \ell(A\cup B, C|\cM)\leq \ell(A, B|\cM)+\ell(B, C|\cM).$$
   \item  let $\varphi \colon \cM \to \cN$ be any R-module homomorphism, $A \in \cF(\cM)$, then for any $\varepsilon > 0$, there exists $B \in \cF(\ker \varphi)$ such that
   $$\ell(A, B|\cM) \leq \ell(\varphi(A)|\varphi(\cM)) +\varepsilon.$$
\end{enumerate}
\end{proposition}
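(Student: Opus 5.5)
The key tool is the identity
$$\ell(A\cup B|\cM)=\ell(A,B|\cM)+\ell(B|\cM)$$
from Example \ref{grading eg}. In the length case I would prove it by additivity of $\rL$ applied to
$$0\to <B>_R\to <A\cup B>_R\to <\pi_B(A)>_R\to 0.$$
In the Sylvester case I would use part (1) of Theorem \ref{deep S-property}, applied to $<B>_R\subseteq <A\cup B>_R\subseteq \cM$ and passed to the quotient $\pi_B(\cM)=\cM/<B>_R$. First I would check that $\ell(\cdot,\cdot|\cdot)$ is a bivariant weak length. Regularity is clear, since $\pi_0$ is the identity. The direct product property holds because $\pi_{B\times B'}$ splits as $\pi_B\times\pi_{B'}$, together with additivity (respectively the direct sum axiom). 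For the quotient property, $\varphi$ induces a surjection $\pi_B(\cM)\to\pi_{\varphi(B)}(\cN')$, where $\cN'=\varphi(\cM)$, sending $\pi_B(<A>_R)$ onto $\pi_{\varphi(B)}(<\varphi(A)>_R)$. In the Sylvester case I then apply part (3) of Theorem \ref{deep S-property} and pass from $\varphi(\cM)$ to $\cN$ by monotonicity in the ambient module. Continuity in $A$ is upper continuity of $\rL$, respectively $\dim$. The upgrading identity $\ell(A,0|\cM)=\ell(A|\cM)$ holds by definition.

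Continuity in the second variable, for $A$ finite and $B_n\uparrow B$, I would derive from the identity above. Writing $\cM_n=<B_n>_R\uparrow <B>_R$, we need
$$\rL\big(<A>_R+\cM_n\,/\,\cM_n\big)\downarrow \rL\big(<A>_R+<B>_R\,/\,<B>_R\big).$$
For the length case, $(<A>_R+\cM_n)/\cM_n\cong <A>_R/(<A>_R\cap \cM_n)$, and the submodules $<A>_R\cap\cM_n$ increase to $<A>_R\cap <B>_R$. Since $\rL(<A>_R)<\infty$ by local finiteness, additivity together with upper continuity gives convergence. For the Sylvester case this is exactly part (4) of Theorem \ref{deep S-property} (lower continuity along quotients), applied to the finitely generated module $<A>_R$ with directed sets $\cM'$; monotonicity in $n$ follows from the quotient property.

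For statement (1), the first inequality is monotonicity in the first variable. For the second, let $\pi_C\colon\cM\to\cM/<C>_R$ and apply the alternative additivity (respectively, additivity) to
$$<\pi_C(B)>\ \subseteq\ <\pi_C(A\cup B)>\ \subseteq\ \pi_C(\cM).$$
This gives
$$\ell(A\cup B,C|\cM)=\dim\big(<\pi_C(B)>|\pi_C\cM\big)+\dim\big(\text{image of }<A>\text{ in }\cM/<B\cup C>\ \big|\ \cM/<B\cup C>\big).$$
The first term is $\ell(B,C|\cM)$. The second term is $\ell(A,B\cup C|\cM)$, which is at most $\ell(A,B|\cM)$ by the quotient property along $\cM/<B>\to\cM/<B\cup C>$.

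For statement (2), the main obstacle is approximating $\ker\varphi$ by a finite set. We have $\varphi(\cM)\cong\cM/\ker\varphi$, so by the isomorphism invariance of Proposition \ref{basic bivariant} we get
$$\ell(\varphi(A)|\varphi(\cM))=\ell(A,\ker\varphi|\cM).$$
Exhausting $\ker\varphi$ by an increasing net of finite sets $B_n$, the continuity in the second variable established above (valid since $A$ is finite) gives $\ell(A,B_n|\cM)\downarrow\ell(A,\ker\varphi|\cM)$. Choosing $n$ large then yields $B=B_n$ with $\ell(A,B|\cM)\le\ell(\varphi(A)|\varphi(\cM))+\varepsilon$. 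Uncountable kernels are handled because the continuity statements hold for directed families. This is where local finiteness, respectively finite generation of $<A>_R$, is genuinely needed.
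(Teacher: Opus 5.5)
Your proposal is essentially the paper's proof. It uses the same identity $\ell(A\cup B|\cM)=\ell(A,B|\cM)+\ell(B|\cM)$, the same use of Theorem \ref{deep S-property} (1), (3), (4) to check the axioms, and the same reduction of (2) to lower continuity via $\ell(\varphi(A)|\varphi(\cM))=\ell(A,\ker\varphi|\cM)$. Your two variants are, if anything, slightly more careful than the paper: you derive (1) by additivity inside $\pi_C(\cM)$ rather than subtracting the finite quantity $\ell(C|\cM)$, and you pass explicitly from $\varphi(\cM)$ to $\cN$ by monotonicity in the ambient module.

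One caveat applies equally to your argument and the paper's. The direct product axiom relies on the splittings $<A\times A'>_R=<A>_R\times <A'>_R$ and $<B\times B'>_R=<B>_R\times <B'>_R$, and these require $0$ to lie in the sets. For $R$ a field and $\rL$ the vector-space dimension, $\ell(\{(1,1)\},\{(0,0)\}|R^2)=1\neq 2=2\,\ell(\{1\},\{0\}|R)$. So that axiom is only justified for sets containing $0$, which suffices for the later applications.
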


\begin{proof}
 We consider the case that $\ell$ is induced from a bivariant Sylvester module rank function $\dim(\cdot|\cdot)$. The case for the locally-finite lengths can be similarly deduced in an easier way. 

 First we verify that $\ell(\cdot, \cdot|\cdot)$ is a bivariant $R$-weak length function. It is clear that $\ell((0, 0)|\cM)=0$ for any $R$-module $\cM$. For any nonempty $A, B \subseteq \cM$ and $A', B' \subseteq \cM'$ of some $R$-modules $\cM$ and $\cM'$, using the direct sum property of $\dim(\cdot|\cdot)$, we have
 \begin{align*}
     &\ell(A\times A', B\times B'|\cM\times \cM')\\
     &=\dim(\pi_{B\times B'}(<A\times A'>_R)|\pi_{B\times B'}(\cM\times \cM')) \\
     &=\dim(\pi_B(<A>_R)\oplus \pi_{B'}(<A'>_R)|\pi_B(\cM)\oplus \pi_{B'}(\cM'))\\
     &=\dim(\pi_B(<A>_R)|\pi_B(\cM))+\dim(\pi_{B'}(<A'>_R)|\pi_{B'}(\cM'))\\
     &=\ell(A,B|\cM) + \ell(A',B'|\cM').
 \end{align*}

To verify the quotient property, for any $R$-module homomorphism $\varphi \colon \cM \to \cN$ and $A, B \subseteq \cM$, it induces an $R$-module homomorphism $\varphi' \colon \cM/<B>_R \to \cN/<\varphi(B)>_R$ sending $x+<B>_R$ to $\varphi(x)+<\varphi(B)>_R$ and $\pi_{\varphi(B)}\circ \varphi =\varphi' 
\circ \pi_B$. Applying Theorem \ref{deep S-property} (3), we have
\begin{align*}
    \ell(\varphi(A), \varphi(B)|\cN)&=\dim(\pi_{\varphi(B)}(\varphi(A))|\pi_{\varphi(B)}(\cN))\\
    &=\dim(\varphi'(\pi_B(A))|\varphi'(\pi_B(\cN)))\\
    &\leq \dim(\pi_B(A)|\pi_B(\cN))=\ell(A, B|\cN).
\end{align*} 
The upper continuity of $\ell$ easily follows from the upper continuity of $\dim(\cdot|\cdot)$. The lower continuity of $\ell$ follows from Theorem \ref{deep S-property} (4). 

To show that $\ell$ has the strong quotient property, we consider an $R$-module homomorphism $\varphi \colon \cM \to \cN$, $A \in \cF^0(\cM)$ and $B \in \cF^0(\ker \varphi)$. Note that $\varphi $ factors through $\pi_B$. By Theorem \ref{deep S-property} (3), we have 
$$\dim(\varphi(A)|\cN) \leq \dim(\pi_B(A)|\pi_B(\cM)).$$
Applying the alternative addition formula of $\dim$ to $R$-modules  $<B>_R \subseteq <A+B>_R \subseteq \cM$, we obtain
\begin{align*}
    \ell(A+B|\cM)&=\dim(<A+B>_R|\cM)\\
    &=\dim(<B>_R|\cM) +\dim(\pi_B(<A>)|\pi_B(\cM))\\
    &\geq \dim(<B>_R|\cM) +\dim(\varphi(A)|\cN)\\
    &=\ell(B|\cM) +\ell(\varphi(A)|\cN).
\end{align*}

Now we verify that $\ell(\cdot, \cdot)$ is a proper upgrading. Using $\dim(R)=1< \infty$, for any $C \in \cF(\cM)$ of an $R$-module $\cM$, we have 
$$\ell(C|\cM)=\dim(<C>_R|\cM) \leq \dim(<C>_R|<C>_R) \leq |C|\dim(R)< \infty.$$
This shows that $\ell$ is locally finite.  Then (1) follows from 
   \begin{align*}
    \ell(A\cup B, C|\cM )&=\ell(A\cup B\cup C|\cM) -\ell(C|\cM)\\
    &=\ell(A\cup B\cup C, B\cup C|\cM) +\ell((B\cup C)|\cM) -\ell(C|\cM)\\
    &= \ell(A, B\cup C|\cM)+\ell(B, C|\cM)\\
    &\leq \ell(A, B|\cM) +\ell(B, C|\cM).
\end{align*} 

For (2), denote by $\overline{\varphi}$ the isomorphism $\cM/\ker \varphi \to \cN$ sending $x+\ker \varphi$ to $\varphi(x)$. Put  $\mathcal{K}=\ker \varphi$. Then $\varphi=\overline{\varphi} \circ \pi_\mathcal{K}$. By the invariance property, we have 
$$\dim(\varphi(<A>_R)|\varphi(\cM))=\dim(\pi_\mathcal{K}(<A>_R)|\pi_\mathcal{K}(\cM)).$$
From Theorem \ref{deep S-property} (4),  $\ell(\cdot, \cdot|\cdot)$ is lower continuous. Thus there exists $B \in \cF(\mathcal{K})$ such that
$$\dim(\pi_B(<A>_R)|\pi_B(\cM)) < \dim(\pi_\mathcal{K}(<A>_R)|\pi_\mathcal{K}(\cM)) +\varepsilon. $$
It follows that
\begin{align*}
    \ell(A, B|\cM)&=\dim(\pi_B(<A>_R)|\pi_B(\cM))\\
    &< \dim(\varphi(<A>_R)|\varphi(\cM))+\varepsilon =\ell(\varphi(A)|\varphi(\cM))+\varepsilon.
\end{align*}

\end{proof}

Another important upgrading situation for the $\bZ$-weak length function
$\ell(A|\cM):=\log|A|$
(see \cite[Section 2.2]{DFGS23}) is the following:  
$$\ell(A, B|\cM):=\min \{ \ell(C): A \subseteq C+B \mbox{ \ for \ some \ } C \subseteq \cM \}.$$

The following properties are discussed in \cite[Section 2.2]{DFGS23}. For completeness, we provide a slightly short proof.

\begin{proposition} \label{log upgrading}
   The $\bZ$-weak length function $\ell(\cdot|\cM):=\log|\cdot|$ has the strong quotient property and has a proper upgrading as above.  Precisely the following hold:
\begin{enumerate}
    \item  for any $A, B \subseteq \cM$ of a $\bZ$-module $\cM$ and $C \in \cF(\cM)$, we have
    $$\ell(A, C|\cM) \leq  \ell(A, B|\cM)+\ell(B, C|\cM).$$
In particular, $\ell(A|\cM)\leq \ell(A,B|\cM)+\ell(B|\cM)$. 
\item for any $A, B \subseteq \cM_1 \subseteq \cM_2$ of $\bZ$-modules $\cM_1$ and $\cM_2$, one has
$\ell(A,B|\cM_1) \geq \ell(A, B|\cM_2)$.
   \item  for any $\bZ$-module homomorphism $\varphi \colon \cM \to \cN$ and $A \in \cF(\cM)$, there exists $B \in \cF^0(\ker \varphi)$ such that
   $$\ell(A, B|\cM) = \ell(\varphi(A)|\cN).$$
\end{enumerate}
    
\end{proposition}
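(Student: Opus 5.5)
I will verify the items in turn, working directly from the definition $\ell(A,B|\cM)=\min\{\log|C| : A\subseteq C+B,\ C\subseteq \cM\}$. Here $\ell(A,B|\cM)$ is the logarithm of the minimal number of translates of $B$ needed to cover $A$. The value is $+\infty$ if no finite $C$ works. The minimum is attained because the possible values of $|C|$ are natural numbers.

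\emph{Strong quotient property.} Let $A\in\cF^0(\cM)$ and $B\in\cF^0(\ker\varphi)$. Choose a set $A'\subseteq A$ of representatives for the distinct values of $\varphi$ on $A$, so that $|A'|=|\varphi(A)|$.
\begin{itemize}
\item The sets $a'+B$ with $a'\in A'$ are pairwise disjoint. Indeed, if $a_1'+b_1=a_2'+b_2$, then applying $\varphi$ gives $\varphi(a_1')=\varphi(a_2')$, hence $a_1'=a_2'$.
\item Each $a'+B$ has exactly $|B|$ elements.
\end{itemize}
Therefore $|A+B|\ge |A'+B|=|\varphi(A)||B|$. Taking logarithms gives the strong quotient property.

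\emph{Bivariant weak length axioms.}
\begin{itemize}
\item Regularity is clear.
\item For the quotient property: if $A\subseteq C+B$, then $\varphi(A)\subseteq\varphi(C)+\varphi(B)$ and $|\varphi(C)|\le|C|$.
\item For the direct product, the inequality $\le$ comes from taking $C\times C'$. The reverse inequality is the one nontrivial point, which I return to below.
\item For continuity in $A$: monotonicity is clear. If $A_n$ increases to $A$ and $\ell(A_n,B|\cM)\le\log k$ for all $n$, pick covers $C_n$ with $|C_n|\le k$. I would use a compactness or pigeonhole argument to obtain a single $k$-element cover of $A$. Concretely, define the equivalence relation $x\sim y$ iff $x-y\in B-B$ and reason via Hall-type arguments, or more simply by König's lemma applied to finitely many choices on each finite stage.
\item For continuity in $B$ with $A$ finite: the covering $C$ may be chosen with $|C|\le|A|$, taking $C\subseteq A-B$. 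Each inclusion $A\subseteq C+B$ involves only finitely many elements of $B$, so it holds already with some $B_n$. Hence $\ell(A,B_n|\cM)$ decreases to $\ell(A,B|\cM)$.
\item Upgrading: $\ell(A,\{0\}|\cM)=\log|A|$, since $A\subseteq C$ forces $|C|\ge|A|$, and $C=A$ works.
\end{itemize}

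\emph{Item (1).} If $A\subseteq C_1+B$ and $B\subseteq C_2+C$, then $A\subseteq (C_1+C_2)+C$ with $|C_1+C_2|\le|C_1||C_2|$. The particular case follows by taking $C=\{0\}$.

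\emph{Item (2).} Any cover $C\subseteq\cM_1$ is also a cover inside $\cM_2$, so the minimum over the larger family in $\cM_2$ is no larger.

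\emph{Item (3).} Let $A\in\cF(\cM)$ and choose $A'\subseteq A$ as above. For each $a\in A$, let $a'\in A'$ be the representative with $\varphi(a)=\varphi(a')$. Set
$$B=\{0\}\cup\{a-a' : a\in A\}\subseteq\ker\varphi.$$
This set is finite and contains $0$, and $A\subseteq A'+B$. Hence $\ell(A,B|\cM)\le\log|\varphi(A)|$. The reverse inequality follows from the quotient property: applying $\varphi$, and using $\varphi(B)=\{0\}$, gives $\ell(A,B|\cM)\ge\ell(\varphi(A),0|\cN)=\log|\varphi(A)|$.

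This also yields properness ii) with $\varepsilon=0$, since $\ell(\varphi(A)|\varphi(\cM))=\log|\varphi(A)|$ (the weak length is independent of ambient modules). Properness i) is item (1).

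\emph{Main obstacle.} I expect the main obstacle to be the superadditivity half of the direct product axiom, $\ell(A\times A',B\times B')\ge\ell(A,B)+\ell(A',B')$. Covering numbers are only submultiplicative in general. If the axiom turns out to fail as stated, I would restrict to the cases actually needed later: $B,B'$ subgroups (or finite sets in $\cF^0$), and the upper-bound estimates of the addition formula. For subgroups $B,B'$ the covering number equals the number of cosets met by the set, which is exactly multiplicative.

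Otherwise, I would prove $\ge$ by projecting. Given a cover $D$ of $A\times A'$ by translates of $B\times B'$, project $D$ to the second coordinate. Then, for a minimal fibre, argue that the fibres over the first coordinate must each cover $A$, giving $|D|\ge\ell$-many per element. This fibre-counting argument is where I would expect to need care.
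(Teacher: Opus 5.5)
\textbf{The items the paper proves.} Your arguments for the strong quotient property and for items (1)--(3), which is everything the paper's proof covers, are correct. The strong quotient property (disjoint translates $a'+B$ for $a'\in A'$) and items (1), (2) are the paper's arguments.

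\textbf{Item (3) takes a different route.} The paper uses the canonical witness $B=(A-A)\cap\ker\varphi$. It obtains the lower bound $\ell(A,B|\cM)\ge\log|\varphi(A)|$ by a pigeonhole contradiction through $(B-B)+(B-B)\subseteq\ker\varphi$. You take the smaller, representative-dependent set $B=\{0\}\cup\{a-a':a\in A\}$. You get the lower bound in one line from the quotient property: $A\subseteq C+B$ with $B\subseteq\ker\varphi$ forces $\varphi(A)\subseteq\varphi(C)$. Your route is shorter. It also shows the lower bound holds for \emph{every} $B\subseteq\ker\varphi$, so (3) reduces to exhibiting some finite $B\subseteq\ker\varphi$ containing $0$ and the differences $a-a'$; the paper's set is one such. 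The paper's choice buys only canonicity, which is what its parenthetical remark in the addition-formula proof quotes.

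\textbf{The extra bivariant-axiom checks.} The paper's proof never verifies the bivariant axioms, and this is where your proposal has problems. Your suspicion about the direct product is justified: superadditivity genuinely fails, even for finite $B$ containing $0$. Take $\cM=\bZ/3\bZ$, $A=\cM$ and $B=\{0,1\}$, so $\ell(A,B|\cM)=\log 2$. The diagonal $C=\{(0,0),(1,1),(2,2)\}$ satisfies $A\times A\subseteq C+B\times B$. Hence $\ell(A\times A,B\times B|\cM\times\cM)=\log 3<2\log 2$. So your fibre-counting branch cannot be completed, and your fallback is the only option. What survives is the subadditive half, giving $\ell(A+A',B+B'|\cM)\le\ell(A,B|\cM)+\ell(A',B'|\cM)$. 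Together with the quotient property, monotonicity in the second slot, and properness i) and ii), this is all the proof of the addition formula actually uses.

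\textbf{Continuity in the first slot.} Your compactness/K\"onig sketch cannot work for infinite $B$. In $\cM=\bZ$ take $B=\{0,1,2,\dots\}$ and $A_n=\{-n,\dots,n\}$. Then $\ell(A_n,B|\bZ)=0$ for all $n$, while $\ell(\bZ,B|\bZ)=+\infty$, since finitely many translates of $B$ are bounded below. For finite $B$ no compactness is needed: $\ell(A_n,B|\cM)\le\log k$ forces $|A_n|\le k|B|$, so the increasing sequence $A_n$ stabilizes. Restrict that step to finite $B$, which again is the only case used.
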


\begin{proof}
(1) and (2) are clear by definition.  For (3),  put $B=(A-A)\cap \ker \varphi$. We shall prove $B$ is the desired. Choose $A_0\subseteq A$ such that $|A_0|=|\varphi(A_0)|=|\varphi(A)|$. In particular, $\varphi$ is injective when restricted on $A_0$. By definition of $B$ and choice of $A_0$, we have 
 $$A \subseteq \bigsqcup_{a \in A_0} (a+B)$$
 and $A\cap (a+B)\neq \emptyset$ for every $a \in A_0$. It follows that 
 $$\ell(A, B|\cM) \leq \ell(A_0|\cM)=\ell(\varphi(A)|\cN).$$               

 Next we show $\ell(\varphi(A)|\cN) \leq \ell(A,B|\cM)$. Assume $\ell(A,B|\cM) < \ell(\varphi(A)|\cN)$. By definition, there exists $C \in \cF(\cM)$ such that 
 $$A \subseteq C+B {\rm  \ and \ }\log |C|=\ell(A, B|\cM)< \ell(\varphi(A)|\cN)=\log|A_0|.$$  For every \( a \in A_0 \), since $A\cap (a+B)\neq \emptyset$ and $A \subseteq C+B$,  there exists some \( c_a \in C \) such that \( (c_a + B) \cap (a + B) \neq \emptyset \). This provides us a map $f \colon A_0 \to C$ sending $a$ to $c_a$. Since \( |A_0| > |C| \), it forces that $f$ is not injective. Thus there exists $a\neq a' \in A_0$ such that 
$$(a+B)\cap (c+B)\neq \emptyset {\rm \ and  \ } (a'+B)\cap (c+B)\neq \emptyset.$$
It concludes that $0\neq a-a' \in (B-B)+(B-B) \subseteq \ker \varphi$, which contradicts the choice of $A_0$. 

To see the strong quotient property, consider an injective map $\varphi(A)\times B \to A_0+B$ sending $(y, b)$  to $a_y +b$ where $a_y \in A_0$ is the unique element such that $\varphi(a_y)=y$. Then
$$\ell(A+B|\cM) \geq \ell(A_0+B|\cM) =\ell(\varphi(A)\times B|\cN\times \cM)=\ell(\varphi(A)|\cN)+\ell(B|\cM).$$
\end{proof}

\begin{remark}
For $\ell(A|\cM)=\log|A|$ for any  subset $A \subseteq \cM$ of an abelian group $\cM$, one may consider
$$\ell'(A, B|\cM):=\ell(\varphi_B(A)|\varphi_B(\cM))=\log|\varphi_B(A)|$$
for the quotient map $\varphi_B \colon \cM \to \cM/<B>$ and $A, B \subseteq \cM$.
By definition one can easily infer that 
$$\ell'(A, B|\cM) \leq \ell(A, B|\cM)$$
and the inequality can be strict! Though $\ell'(\cdot, \cdot)$ is an upgrading of $\ell$, it is not proper since the inequality
$$\ell'(A|\cM) \leq \ell'(A, B|\cM)+\ell'(B|\cM)$$
does not hold.
For example, consider $\cM=\bZ^2, B=\{(1, 0)\}, A=\{(1,0), (1,1)\}$. One easily computes that $\ell'(A|\cM)=\log 2$ and $\ell'((A, B)|\cM)=\log 1 =0=\ell'(B|\cM)$. This illustrates that the proper upgrading paths of the weak length functions induced from length functions and $\log|\cdot|$ are different.

On the other hand, when $B$ is a subgroup, one easily concludes that $\ell(A, B|\cM)=\ell(A,<B>|\cM)=\ell'(A,B|\cM)$ (see also \cite[Proposition 2.6 (a)]{DFGS23}).
\end{remark}

\section{Mean weak length function}

In this section we fix a locally finite $R$-weak length function $\ell$ and an amenable group $\Gamma$. We shall introduce the notion of mean weak length based on a given weak length and discuss its basic properties. 

Let $\cM$ be any $\RG$-module. For each nonempty subset $A \subseteq \cM$ and $F \in \cF(\Gamma)$ put
$$A^{[F]}:=\sum_{s \in F} s^{-1}A.$$
\begin{lemma} \label{subadd}
    Let $\cM$ be any $\RG$-module and  $A \in \cF^0(\cM)$.  Consider  a function $\varphi \colon \cF(\Gamma) \cup \{\emptyset\} \to \bR$ sending $F$ to $\ell(A^{[F]}|\cM)$ and $\varphi(\emptyset)=0$. Then $\varphi$ is $\Gamma$-invariant, monotone, and  subadditive.  Moreover, if $\ell$ is induced from a locally finite length function or a Sylvester module rank function,  and $A=-A$, then $\varphi$ is strongly subadditive.
\end{lemma}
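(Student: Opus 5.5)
We prove the four claims in turn, using only the axioms of a weak length (Definition \ref{weak length def}) and Proposition \ref{basic prop}. Throughout, recall that $\cM$ is an $\RG$-module, so each $s\in\Gamma$ acts by an $R$-module automorphism of $\cM$.

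\emph{$\Gamma$-invariance.} For $s\in \Gamma$ we have
$$A^{[Fs]}=\sum_{t\in F}(ts)^{-1}A=s^{-1}A^{[F]}.$$
Multiplication by $s^{-1}$ is an $R$-module automorphism of $\cM$, so Proposition \ref{basic prop}(4) gives $\varphi(Fs)=\varphi(F)$.

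\emph{Monotonicity.} Since $0\in A$, if $F_1\subseteq F_2$ then $A^{[F_1]}\subseteq A^{[F_1]}+A^{[F_2\setminus F_1]}=A^{[F_2]}$. Monotonicity of $\ell$ in the first variable then gives $\varphi(F_1)\le\varphi(F_2)$. The convention $\varphi(\emptyset)=0$ is compatible with this, because $\ell\geq 0$.

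\emph{Subadditivity.} Again because $0\in A$, we have
$$A^{[F_1\cup F_2]}\subseteq A^{[F_1]}+A^{[F_2]}.$$
If $F_1, F_2$ are disjoint this is an equality; if they overlap, the extra summands only enlarge the right-hand side. Monotonicity together with Proposition \ref{basic prop}(1) then yields $\varphi(F_1\cup F_2)\le\varphi(F_1)+\varphi(F_2)$. All values are finite since $\ell$ is locally finite and $A^{[F]}$ is finite.

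\emph{Strong subadditivity.} Now suppose $\ell(A|\cM)=\dim(<A>_R|\cM)$, with $\dim$ a bivariant Sylvester rank function; the length-function case is the special case where $\dim$ does not depend on the ambient module. Put $N_F:=<A^{[F]}>_R$. Since $0\in A$ and $A=-A$, this equals the $R$-submodule generated by $\bigcup_{s\in F}s^{-1}A$, and so
$$N_{F_1\cup F_2}=N_{F_1}+N_{F_2},\qquad N_{F_1\cap F_2}\subseteq N_{F_1}\cap N_{F_2}.$$
The hypothesis $A=-A$ is only used here, to pass from sums of sets to generated submodules. Theorem \ref{deep S-property}(2) gives
$$\dim(N_{F_1}+N_{F_2}|\cM)+\dim(N_{F_1}\cap N_{F_2}|\cM)\le \dim(N_{F_1}|\cM)+\dim(N_{F_2}|\cM).$$
To replace $N_{F_1}\cap N_{F_2}$ by the smaller $N_{F_1\cap F_2}$ we need monotonicity of $\dim(\cdot|\cM)$ in the first variable, which holds by upper continuity or by Theorem \ref{deep S-property}(1). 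This gives
$$\varphi(F_1\cup F_2)+\varphi(F_1\cap F_2)\le\varphi(F_1)+\varphi(F_2).$$
If $F_1\cap F_2=\emptyset$, the inequality reduces to ordinary subadditivity, since $\varphi(\emptyset)=0$.

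The only delicate point is the identification of modules, namely the inclusion $N_{F_1\cap F_2}\subseteq N_{F_1}\cap N_{F_2}$; it is immediate once the generated submodules are written as spans of the translates $s^{-1}A$.
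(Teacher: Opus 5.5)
Your proof is correct. For $\Gamma$-invariance, monotonicity and subadditivity it is essentially the paper's argument. The paper applies the product and quotient properties to the sum map $\psi\colon\langle A^{[F_1]}\rangle_R\times\langle A^{[F_2]}\rangle_R\to\cM$, which is exactly how Proposition~\ref{basic prop}(1) is proved.

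For strong subadditivity you take a genuinely different route.
\begin{itemize}
\item The paper stays inside the weak-length axioms. It embeds $A^{[F_1\cap F_2]}$ anti-diagonally, $u\mapsto(u,-u)$, into $\ker\psi\subseteq\cM\times\cM$. It then applies the strong quotient property together with $\ell(X\cup Y|\cdot)=\ell(X+Y|\cdot)$. This is where $A=-A$ genuinely enters: it places $(u,-u)$ inside $A^{[F_1]}\times A^{[F_2]}$. That argument therefore works for any weak length having those two extra properties.
\item You instead pass to the generated submodules $N_F$ and quote the submodularity inequality of Theorem~\ref{deep S-property}(2), together with monotonicity in the first variable. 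This is shorter and more transparent. However, it relies on a deeper black box, and it is tied to weak lengths of the form $\dim(\langle\cdot\rangle_R|\cM)$.
\end{itemize}

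There are two small inaccuracies, neither of which breaks the proof.

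First, $A=-A$ is not used anywhere in your argument, contrary to what you say. The identity $N_F=\sum_{s\in F}s^{-1}\langle A\rangle_R$ needs only $0\in A$, since generated submodules are closed under negation. So your route actually proves strong subadditivity without that hypothesis.

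Second, a length function $\rL$ is not literally a special case of a bivariant Sylvester rank function. One needs $\rL(R)=1$, so at least a normalization, and none is available if $\rL(R)=\infty$ and you work inside locally $\rL$-finite modules. This is easily repaired. For $\rL$ the required inequality holds with equality by additivity, because
$0\to N_1\cap N_2\to N_2\to (N_1+N_2)/N_1\to 0$
is exact and all terms have finite length.
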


\begin{proof}
    By Proposition \ref{basic prop} (4), $\varphi$ is  $\Gamma$-invariant. Since $0 \in A$, we have  $A^{[F_1]} \subseteq A^{[F_2]}$ provided that $F_1 \subseteq F_2$ are nonempty finite subsets of $\Gamma$. By the monotonicity of $\ell$, it follows that $\varphi$ is monotone as well. Consider the homomorphism $\psi \colon <A^{[F_1]}>_R\times <A^{[F_2]}>_R \to \cM$ sending $(x,y)$ to $x+y$. Since $0 \in A$, we have  $A^{[F_1\cup F_2]} \subseteq \Ima \psi$. By the direct product property and quotient property of $\ell$, we get
    $$\ell(A^{[F_1]}|\cM)+\ell(A^{[F_2]}|\cM) =\ell(A^{[F_1]}\times A^{[F_2]}|\cM\times \cM) \geq \ell(\Ima \psi|\cM) \geq \ell(A^{[F_1\cup F_2]}|\cM).$$

  Now we assume $\ell$ is induced from a locally finite length function or a Sylvester module rank function, $A=-A$, and $F_1\cap F_2\neq \emptyset$. Consider the injective homomorphism $i \colon <A^{[F_1\cap F_2]}>_R \to <A^{[F_1]}>_R\times <A^{[F_2]}>_R$ sending $u$ to $(u, -u)$. Since $A=-A$, we have $i(A^{[F_1\cap F_2]}) \subseteq \ker \psi$. From Proposition \ref{length induces sq}  and Example \ref{grading eg}, $\ell$ has the strong quotient property and $\ell(A+B|\cM)=\ell(A\cup B|\cM)$. Thus
  \begin{align*}
      \ell(A^{[F_1]}|\cM)+\ell(A^{[F_2]}|\cM)&=\ell((A^{[F_1]}\times A^{[F_2]})\cup i(A^{[F_1\cap F_2]})|\cM\times \cM)\\
      &=\ell((A^{[F_1]}\times A^{[F_2]})+ i(A^{[F_1\cap F_2]})|\cM\times \cM)\\
      &\geq \ell(\Ima \psi|\cM)+\ell(i(A^{[F_1\cap F_2]})|\cM\times \cM)\\
      &\geq \ell(A^{[F_1\cup F_2]}|\cM) +\ell(A^{[F_1\cap F_2]}|\cM)
  \end{align*}
as desired.  
    
\end{proof}

By Lemmas \ref{OW} and \ref{subadd},  we conclude that for any $A \in \cF^0(\cM)$ the limit
$$\ml(A|\cM):=\lim_F \frac{\ell(A^{[F]}|\cM)}{|F|}$$
exists. Moreover, if $A=-A$ and $\ell$ is induced from a locally finite length function or a Sylvester module rank function, then   $\ml(A|\cM)= \inf_{F \in \cF(\Gamma)}\frac{\ell(A^{[F]}|\cM)}{|F|}$. 

Adapting the definition of mean length \cite[Section 5]{LL19}, we introduce the notion of mean weak length as follows:
\begin{definition} 
    Let $\cM_1 \subseteq \cM_2$ be two $\RG$-modules. The {\bf mean weak length of $\cM_1$ relative to $\cM_2$} is defined by
    $$\ml(\cM_1|\cM_2):=\sup_{A \in \cF^0(\cM_1)} \ml(A|\cM_2).$$
The {\bf mean weak length of $\cM_1$} is defined as
$$\ml(\cM_1):=\ml(\cM_1|\cM_1)$$
\end{definition}
For the $\bZ$-weak length function $\ell(\cdot|\cM):=\log|\cdot|$, the induced mean weak length corresponds to the {\bf algebraic entropy} on 
$\ZG$-modules, which will be conventionally denoted by $h$.
 As the mean weak length is defined based on a locally finite length function, it recovers as the mean length in \cite[Remark 3.16]{LL18}\cite[Section 5]{LL19}. Since both of these two types of weak lengths are independent of ambient modules, the induced mean weak lengths are independent of the ambient modules.

The following is clear from the definition.
\begin{proposition} 
    For any $\RG$-module $\cM$ and $A, B \in \cF^0(\cM)$, we have
    $$\ml(A\cup B|\cM) \leq \ml(A+B|\cM) \leq \ml(A|\cM)+\ml(B|\cM).$$
\end{proposition}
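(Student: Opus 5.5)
We need to prove, for $A,B\in\cF^0(\cM)$, that $\ml(A\cup B|\cM)\le\ml(A+B|\cM)\le\ml(A|\cM)+\ml(B|\cM)$. The plan is to prove the corresponding inequalities for the sets $(A\cup B)^{[F]}$, $(A+B)^{[F]}$, $A^{[F]}$ and $B^{[F]}$ with $F\in\cF(\Gamma)$ fixed, then divide by $|F|$ and pass to the limit. Each of the four limits exists by Lemma \ref{subadd} and Lemma \ref{OW}, since $A\cup B$, $A+B$, $A$ and $B$ all contain $0$.

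For the first inequality, fix $F\in\cF(\Gamma)$. Since $0\in A\cap B$, we have $A\cup B\subseteq A+B$, and hence $s^{-1}(A\cup B)\subseteq s^{-1}(A+B)$ for every $s\in F$. Summing over $s\in F$ gives
\[
(A\cup B)^{[F]}=\sum_{s\in F}s^{-1}(A\cup B)\subseteq \sum_{s\in F}s^{-1}(A+B)=(A+B)^{[F]}.
\]
The monotonicity of $\ell$ (the upper continuity axiom in Definition \ref{weak length def}) then yields $\ell((A\cup B)^{[F]}|\cM)\le \ell((A+B)^{[F]}|\cM)$.

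For the second inequality, commutativity of addition in $\cM$ gives $(A+B)^{[F]}=\sum_{s\in F}(s^{-1}A+s^{-1}B)=A^{[F]}+B^{[F]}$. Proposition \ref{basic prop}(1) then gives $\ell((A+B)^{[F]}|\cM)\le \ell(A^{[F]}|\cM)+\ell(B^{[F]}|\cM)$.

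Finally, divide both chains by $|F|$ and take $\lim_F$. Because all four limits exist, the inequalities survive the limit and give the statement. The only delicate point is making sure the limits exist before comparing them, and this is exactly what Lemma \ref{subadd} provides through the requirement $0\in A\cap B$. No other obstacle is expected.
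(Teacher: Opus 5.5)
Your proof is correct and is exactly the routine argument the paper has in mind when it calls the statement clear from the definition. You compare $(A\cup B)^{[F]}\subseteq (A+B)^{[F]}=A^{[F]}+B^{[F]}$ at each finite stage using monotonicity and Proposition \ref{basic prop}(1), then pass to the limit, which exists by Lemmas \ref{subadd} and \ref{OW} since all four sets lie in $\cF^0(\cM)$ and $\ell$ is locally finite.
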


From the monotonicity and product properties of weak length, we can easily conclude that mean weak length is additive for direct sums.
\begin{proposition} \label{sum is additive}
    For any $\RG$-module $\cM_1\subseteq \cM_2$ and $\cM_1' \subseteq \cM_2'$, we have
    $$\ml(\cM_1\times\cM_1'|\cM_2\times\cM_2')=\ml(\cM_1|\cM_2)+\ml(\cM_1'|\cM_2').$$
\end{proposition}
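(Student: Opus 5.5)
The plan is to check the two inequalities separately. For a finite set $A\subseteq\cM_1$ and $A'\subseteq\cM_1'$, both containing $0$, I will compare the approximations of the product module with the products of the approximations in each factor. The key observation is that the $\Gamma$-action on $\cM_2\times\cM_2'$ is diagonal, and addition is coordinatewise. Hence for every $F\in\cF(\Gamma)$,
$$(A\times A')^{[F]}=\sum_{s\in F}s^{-1}(A\times A')=A^{[F]}\times A'^{[F]}.$$
The product property of $\ell$ then gives
$$\ell\bigl((A\times A')^{[F]}\,\big|\,\cM_2\times\cM_2'\bigr)=\ell(A^{[F]}|\cM_2)+\ell(A'^{[F]}|\cM_2').$$
Dividing by $|F|$ and taking the limit as $F$ becomes more and more invariant, the limits exist by Lemma \ref{OW} and Lemma \ref{subadd}. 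This yields
$$\ml(A\times A'|\cM_2\times\cM_2')=\ml(A|\cM_2)+\ml(A'|\cM_2').$$

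For the inequality ``$\geq$'', I take the supremum over $A\in\cF^0(\cM_1)$ and $A'\in\cF^0(\cM_1')$. Since $A\times A'\in\cF^0(\cM_1\times\cM_1')$, this immediately gives
$$\ml(\cM_1\times\cM_1'|\cM_2\times\cM_2')\geq \ml(\cM_1|\cM_2)+\ml(\cM_1'|\cM_2').$$
Infinite values cause no trouble here, since the suprema simply add.

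For ``$\leq$'', I take an arbitrary $D\in\cF^0(\cM_1\times\cM_1')$ and let $A$ and $A'$ be its two coordinate projections. Both are finite sets containing $0$, and $D\subseteq A\times A'$. Then $D^{[F]}\subseteq (A\times A')^{[F]}$ for every $F$, so monotonicity of $\ell$ (upper continuity in Definition \ref{weak length def}) gives $\ml(D|\cdot)\leq\ml(A\times A'|\cdot)$. By the identity above this is $\ml(A|\cM_2)+\ml(A'|\cM_2')$, which is bounded by the right-hand side. Taking the supremum over $D$ finishes the proof.

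The only real point to watch is the identity $(A\times A')^{[F]}=A^{[F]}\times A'^{[F]}$. Sums in the product module are coordinatewise, so a sum of elements $s^{-1}(a_s,a'_s)$ with independent choices for each $s$ is exactly a pair of independent sums. Everything else is routine.
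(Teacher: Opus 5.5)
Your proposal is correct and follows essentially the same route as the paper: first get $\ml(A\times A'|\cM_2\times\cM_2')=\ml(A|\cM_2)+\ml(A'|\cM_2')$ from the product property, take suprema for ``$\geq$'', and for ``$\leq$'' bound an arbitrary $D\in\cF^0(\cM_1\times\cM_1')$ by the product of its coordinate projections using monotonicity. You also make explicit the identity $(A\times A')^{[F]}=A^{[F]}\times A'^{[F]}$, which the paper uses without comment.
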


\begin{proof}
    For any $A_1 \in \cF^0(\cM_1)$ and $A_1' \in \cF^0(\cM_1')$, since $\ell$ has the product property, we have
    $$\ml(A_1\times A_1'|\cM_2\times \cM_2')=\ml(A_1|\cM_2)+\ml(A_1'|\cM_2').$$
    In particular, we obtain that $\ml(\cM_1\times \cM_1'|\cM_2\times \cM_2') \geq \ml(\cM_1|\cM_2)+\ml(\cM_1'|\cM_2')$.

    For any $A \in \cF^0(\cM_1\times \cM_1')$, one has $A \subseteq A_1\times A_1'$ where $A_1$ and $A_1'$ are the projections of $A$ into $\cM_1$ and $\cM_1'$ respectively. Since $\ell$ is monotone for increasing subsets, we obtain 
    $$\ml(A|\cM_2\times \cM_2') \leq \ml(A_1\times A_1'|\cM_2\times \cM_2')=\ml(A_1|\cM_2)+\ml(A_1'|\cM_2')$$
    and hence $\ml(\cM_1\times \cM_1'|\cM_2\times \cM_2') \leq \ml(\cM_1|\cM_2)+\ml(\cM_1'|\cM_2')$.
\end{proof}

    \begin{remark}
 \begin{enumerate}
     \item  Distinct from the mean length, the mean weak length does not possess the generating property as in \cite[Lemma 3.10]{LL18}. To see this simply consider $\Gamma=\bZ$ and $\cM=\ZG$. Then $\{\delta_e\}\subseteq \cM$ is a generating finite subset. However, one easily sees that $$h(\{\delta_e\}|\cM)=\lim_{n \to \infty} \frac{\log 1}{n}=0$$
    and $h(\cM)\geq h(<\delta_e>_\bZ|\cM) \geq h(\{[0, k)\delta_e\}|\cM)=\log k$ for any $k \in \bN$.
    \item    Let $\Gamma$ be an amenable containing a subgroup $H$ isomorphic to $\bZ$ such that $[\Gamma: H]=+\infty$, it is direct to compute that 
    $$h(\bZ(\Gamma/H))=0$$
    though $\bZ(\Gamma/H)$ is an abelian group of infinite rank. This stands in sharp contrast to the  $\bZ$-action case.  In  \cite[Lemma 3.3]{DG16}, it is shown that for $\Gamma=\bZ=<s>$, one has $h(\cM)=\infty$ if $\{s^nx: n \in \bN\}$ generates an abelian group of infinite rank for some $x \in \cM$. This fact plays a key role in the reduction step of the proof of addition formula for the case $\Gamma=\bZ$. 
 \end{enumerate}  
\end{remark}

An alternative generating property of mean weak length is as follows:
\begin{proposition} \label{generating prop}
    Let $\cM_1\subseteq \cM_2$ be two $\RG$-modules. Suppose that $\cM_1$ has a generating subset $A$ in the sense that $<A>_{R\Gamma}=\cM_1$. Then 
    $$\ml(\cM_1|\cM_2)=\ml(<A>_R|\cM_2):=\sup_{A' \in \cF^0(<A>_R)} \ml(A'|\cM_2).$$
\end{proposition}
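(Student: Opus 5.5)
The inequality $\ml(<A>_R|\cM_2) \leq \ml(\cM_1|\cM_2)$ is immediate, since $\cF^0(<A>_R) \subseteq \cF^0(\cM_1)$. The real work is the reverse inequality. So fix $B \in \cF^0(\cM_1)$; we want to find $A' \in \cF^0(<A>_R)$ with $\ml(B|\cM_2) \leq \ml(A'|\cM_2)$.

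Since $\cM_1 = <A>_{\RG}$, every element of $B$ is a finite sum of the form $\sum_{s} s\, x_s$ with $x_s \in <A>_R$. Collecting these finitely many terms gives a finite set $K \in \cF(\Gamma)$ and a finite set $A' \in \cF^0(<A>_R)$ with
$$B \subseteq \sum_{k \in K} k A' = A'^{[K^{-1}]}.$$
To arrange this, for each $b \in B$ write $b=\sum_{k\in K} k x_{b,k}$. If we take $A'$ to contain $0$ and all the $x_{b,k}$, then each $b$ lies in $\sum_{k\in K} kA'$, because we may use $0$ in the unused slots. When the same $k$ appears more than once in the expression for $b$, we first merge those terms, since $<A>_R$ is closed under addition.

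Now take $F \in \cF(\Gamma)$. Then
$$B^{[F]} = \sum_{s\in F} s^{-1}B \subseteq \sum_{s \in F}\sum_{k\in K} s^{-1}kA' \subseteq A'^{[K^{-1}F]}.$$
The last inclusion uses $0 \in A'$: a sum over the multiset $\{s^{-1}k\}$ can be regrouped so that each element $t^{-1}$ with $t \in K^{-1}F$ appears with multiplicity at most $|K|$. Strictly, $B^{[F]}$ is contained in a sum of at most $|K|$ copies of $A'^{[K^{-1}F]}$. To handle this, replace $A'$ by $A'' := \underbrace{A'+\cdots+A'}_{|K|}$, which still lies in $\cF^0(<A>_R)$. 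Then $B^{[F]} \subseteq A''^{[K^{-1}F]}$, and monotonicity of $\ell$ gives
$$\ell(B^{[F]}|\cM_2) \leq \ell(A''^{[K^{-1}F]}|\cM_2).$$

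Next we pass to the limit. As $F$ becomes more and more left invariant, so does $K^{-1}F$, and $|K^{-1}F|/|F| \to 1$. More precisely, $|K^{-1}F \setminus F| \leq \delta |K|\,|F|$ for $(K,\delta)$-invariant $F$, after symmetrizing $K$ to contain $e$ and its inverses. Also, $K^{-1}F$ is $(K',\delta')$-invariant whenever $F$ is sufficiently invariant, since left invariance is stable under such bounded enlargements. Dividing by $|F|$ and using that the limit defining $\ml(A''|\cM_2)$ exists by Lemma \ref{OW} and Lemma \ref{subadd}, we get $\ml(B|\cM_2) \leq \ml(A''|\cM_2)$. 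Taking the supremum over $B$ then gives $\ml(\cM_1|\cM_2) \leq \ml(<A>_R|\cM_2)$.

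The main obstacle is this last step: showing that the ratio along $K^{-1}F$ converges to the same limit as along $F$. A possibly cleaner alternative avoids the invariance of $K^{-1}F$ altogether by using subadditivity. Since $K^{-1}F \subseteq F \cup (K^{-1}F\setminus F)$, subadditivity and $\Gamma$-invariance from Lemma \ref{subadd} give
$$\ell(A''^{[K^{-1}F]}|\cM_2) \leq \ell(A''^{[F]}|\cM_2) + |K^{-1}F\setminus F|\,\ell(A''|\cM_2).$$
Local finiteness of $\ell$ makes $\ell(A''|\cM_2)$ finite, so the error term is $o(|F|)$, and dividing by $|F|$ finishes the argument. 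I would use this alternative route.
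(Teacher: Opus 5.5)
Your proof is correct and follows the same route as the paper: write $B\subseteq\sum_{k\in K}kA'$ with $A'\in\cF^0(<A>_R)$, push this to $B^{[F]}\subseteq A''^{[K^{-1}F]}$, and compare $K^{-1}F$ with $F$ using the monotonicity, subadditivity and $\Gamma$-invariance of Lemma~\ref{subadd}. Your passage to the $|K|$-fold sumset $A''$ is actually necessary, since the paper's literal inclusion $B^{[F]}\subseteq (CA)^{[K^{-1}F]}$ can fail when several pairs $(s,k)\in F\times K$ give the same $s^{-1}k$; your estimate $|K^{-1}F\setminus F|\le |K|\delta|F|$, combined with the finiteness of $\ell(A''|\cM_2)$, spells out the comparison that the paper attributes only to ``amenability''.
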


\begin{proof}
    Let $B=\{b_1, \cdots, b_n\} \in  \cF^0(\cM_1)$ be any finite subset. Then $B \subseteq <KA>_R$ for some $K \in \cF(\Gamma)$. Write each $b_j$ as 
    $$b_j=\sum_{t \in K} r_{j, t} ta_t$$
    for some $r_{j, t} \in R$ and $a_t \in A$. Put $C=\{r_{j,t}: j=1,\cdots, n, t \in K\}$. 
    Then it is easy to verify that $B^{[F]} \subseteq (CA)^{[K^{-1}F]}$. For any $F \in \cF(\Gamma)$, it follows that
$$\frac{\ell(B^{[F]}|\cM_2)}{|F|} \leq \frac{\ell((CA)^{[K^{-1}F]}|\cM_2)}{|F|}.$$
    
 By Lemma \ref{subadd} and amenability of $\Gamma$, we conclude that   
 $$\ml(B|\cM_2) \leq \ml(CA|\cM_2) \leq \ml(<A>_R|\cM_2).$$
Thus $\ml(\cM_1|\cM_2) \leq \ml(<A>_R|\cM_2)$ and  the desired equality follows.
\end{proof}

\begin{proposition} \label{tensor case}
    Let $\cN_1\subseteq \cN_2$ be two $R$-modules. Then for any finite set $A \subseteq \cN_1$, one has
    $$\ml(\RG\otimes_R<A>_R|\RG\otimes_R \cN_2)=\ell(<A>_R|\cN_2).$$
    In particular, $\ml(\RG\otimes_R\cN_1|\RG\otimes_R \cN_2)=\ell(\cN_1|\cN_2)$.
\end{proposition}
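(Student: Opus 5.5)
The plan is to use the canonical identification $\RG\otimes_R \cN_2\cong \bigoplus_{s\in\Gamma} s\cN_2$. Under it, $\RG\otimes_R<A>_R$ corresponds to $\bigoplus_{s\in\Gamma} s<A>_R$, and an element $t\in\Gamma$ acts by shifting coordinates. We identify $\cN_2$ with the summand $e\otimes\cN_2$. The key computation is the following: for a finite set $B\subseteq<A>_R$ with $0\in B$ and any $F\in\cF(\Gamma)$, the set $B^{[F]}=\sum_{s\in F}s^{-1}B$ is exactly the ``box'' $\prod_{s\in F^{-1}} sB\times\{0\}$ in the decomposition
$$\RG\otimes_R\cN_2=\Big(\bigoplus_{s\in F^{-1}} s\cN_2\Big)\times\Big(\bigoplus_{s\notin F^{-1}} s\cN_2\Big).$$
The summands sit in distinct coordinates, so the sum is a product set.

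By the product property (applied $|F|$ times), regularity $\ell(\{0\}|\cdot)=0$, and the invariance property (Proposition \ref{basic prop}(4)) for the isomorphisms $s\cN_2\cong\cN_2$, we get
$$\ell(B^{[F]}|\RG\otimes_R\cN_2)=|F|\,\ell(B|\cN_2).$$
Hence $\ml(B|\RG\otimes_R\cN_2)=\ell(B|\cN_2)$ exactly.

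For the lower bound, take the supremum over $B\in\cF^0(<A>_R)$. Upper continuity and monotonicity of $\ell$ give $\sup_B\ell(B|\cN_2)=\ell(<A>_R|\cN_2)$. Upper continuity is phrased for increasing sequences, so this step needs care when $<A>_R$ is uncountable. If the value is finite, I would argue via monotonicity plus a countable cofinal chain inside the relevant countable submodule. Alternatively, I would read the axiom as applying to directed unions, as the paper does elsewhere.

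For the upper bound, let $B'\in\cF^0(\RG\otimes_R<A>_R)$. Each element of $B'$ is a finite sum $\sum_t t^{-1}\otimes b_t$ with $b_t\in<A>_R$. So there are $K\in\cF(\Gamma)$ and $B\in\cF^0(<A>_R)$ with $B'\subseteq\sum_{t\in K}t^{-1}B=B^{[K]}$. Then
$$B'^{[F]}\subseteq\sum_{s\in F}\sum_{t\in K}s^{-1}t^{-1}B=B^{[KF]}.$$
By monotonicity and the box computation,
$$\ell(B'^{[F]}|\RG\otimes_R\cN_2)\le|KF|\,\ell(B|\cN_2).$$
If $F$ is $(K,\delta)$-invariant, then $|KF|\le|F|+|K|\delta|F|$, so $|KF|/|F|\to1$ as $F$ becomes more invariant. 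It follows that $\ml(B'|\cdot)\le\ell(B|\cN_2)\le\ell(<A>_R|\cN_2)$. Taking the supremum over $B'$ gives the equality.

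For the ``in particular'' statement, note that $\RG\otimes_R\cN_1$ is the directed union of the submodules $\RG\otimes_R<A>_R$ over finite $A\subseteq\cN_1$. Any finite $B'$ lies in one of them, so $\ml(\RG\otimes_R\cN_1|\RG\otimes_R\cN_2)=\sup_A\ell(<A>_R|\cN_2)$. By upper continuity this equals $\ell(\cN_1|\cN_2)$.

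I expect the main obstacle to be the bookkeeping in the box identity. One must check that $B^{[F]}$ is genuinely a product set, which uses $0\in B$ and the disjointness of coordinates, and that the product property applies to the infinite direct sum split as a finite product times the remaining summand. A secondary issue is the sequential form of upper continuity, discussed above.
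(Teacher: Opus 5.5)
\textbf{Gap in the upper-bound reduction.} Your plan follows the paper's proof. It has the same box identity $\ell(B^{[F]}|\RG\otimes_R\cN_2)=|F|\,\ell(B|\cN_2)$ for $B\in\cF^0(\langle A\rangle_R)$ and the same lower bound via sets $e\otimes B$. It also reduces an arbitrary finite $B'\subseteq\RG\otimes_R\langle A\rangle_R$ to such sets; the paper does this by citing Proposition \ref{generating prop}, while you do it by hand. Your hand reduction contains a false step: the identity
\[
\sum_{s\in F}\sum_{t\in K}s^{-1}t^{-1}B=B^{[KF]}
\]
fails. Distinct pairs $(s,t)\in F\times K$ can have the same product $ts$. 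The coordinate indexed by $g^{-1}$, $g\in KF$, then receives the $m_g$-fold sumset $B+\cdots+B$, where $m_g=|\{(s,t)\in F\times K: ts=g\}|$, rather than $B$. Consequently your intermediate inequality $\ml(B'|\RG\otimes_R\cN_2)\le\ell(B|\cN_2)$ is false in general. For a counterexample, take $R=\bZ$, $\Gamma=\langle x\rangle\cong\bZ$, $\cN_1=\cN_2=\bZ/N\bZ$ with $N\ge 3$, $\ell=\log|\cdot|$, $B=\{0,1\}$, $K=\{e,x\}$ and $B'=B^{[K]}$. For $F=\{e,x,\dots,x^{n-1}\}$ the interior coordinates of $(B')^{[F]}$ range over $\{0,1,2\}$. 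Hence $\ml(B'|\RG\otimes_R\cN_2)=\log 3>\log 2=\ell(B|\cN_2)$.

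\textbf{Repair.} The conclusion survives with a one-line fix. Since $g=ts$ and $t$ determine $s$, you have $m_g\le|K|$. Because $0\in B$, setting $D:=B+\cdots+B$ ($|K|$ summands) gives a finite subset of $\langle A\rangle_R$ containing $0$ with $(B')^{[F]}\subseteq D^{[KF]}$. By the box identity and monotonicity,
\[
\ell((B')^{[F]}|\RG\otimes_R\cN_2)\le|KF|\,\ell(D|\cN_2)\le|KF|\,\ell(\langle A\rangle_R|\cN_2).
\]
Your estimate $|KF|\le(1+|K|\delta)|F|$ then finishes the upper bound as you intended. The same collision slip occurs in the paper's inclusion $B^{[F]}\subseteq(CA)^{[K^{-1}F]}$ in the proof of Proposition \ref{generating prop}, on which the paper's argument here relies. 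It is fixed the same way, by replacing $CA$ with a $|K|$-fold sumset.

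\textbf{Upper continuity.} Of your two treatments, only the second is sound. Under the sequential axiom alone, with $R$ (hence $\langle A\rangle_R$) uncountable, nothing forces a countable subset to carry the value $\ell(\langle A\rangle_R|\cN_2)$. What you need is $\ell(X|\cM)=\sup_{B\in\cF(X)}\ell(B|\cM)$, the directed-union reading. This is what the paper tacitly uses when it writes ``by the continuity of $\ell$''.
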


\begin{proof}
    Since $\delta_e\otimes A$ generates $\RG\otimes_R<A>_R$ as $\RG$-modules, by Proposition \ref{generating prop}, we have
    $$\ml(\RG\otimes_R<A>_R|\RG\otimes \cN_2)=\sup_{B \in \cF^0(<A>_R)} \ml(\delta_e\otimes B|\RG\otimes_R\cN_2).$$

  Note that $\RG\otimes_R\cN_2$ is isomorphic to $\oplus_{s \in \Gamma}(R\delta_{s^{-1}}\otimes \cN_2)$ as $R$-modules. For each $B \in \cF^0(<A>_R)$ and $F \in \cF(\Gamma)$, we have $\sum_{s \in F} s^{-1}(\delta_e\otimes B)=\oplus_{s \in F} (\delta_{s^{-1}}\otimes B)$ and 
  $$\oplus_{s \in \Gamma}(R\delta_{s^{-1}}\otimes \cN_2)=\oplus_{s \in F}(R\delta_{s^{-1}}\otimes \cN_2)\times\oplus_{s \in \Gamma \setminus F}(R\delta_{s^{-1}}\otimes \cN_2).$$
 Using $\ell(0|\cM)=0$  for any $R$-module $\cM$ and the direct product property of $\ell$, we have
 \begin{align*}
    &\ell(\sum_{s \in F} s^{-1}(\delta_e\otimes B)|\RG\otimes_R\cN_2)\\
    &=\ell(\oplus_{s \in F} (\delta_{s^{-1}}\otimes B)|\oplus_{s \in F}(R\delta_{s^{-1}}\otimes \cN_2)\times\oplus_{s \in \Gamma \setminus F}(R\delta_{s^{-1}}\otimes \cN_2))\\
    &=\ell(\oplus_{s \in F} (\delta_{s^{-1}}\otimes B)|\oplus_{s \in F}(R\delta_{s^{-1}}\otimes \cN_2))\\
    &=|F|\ell(B|\cN_2).
 \end{align*}
 Dividing both sides by $|F|$ and taking limit along F{\o}lner sets, we obtain
 $$\ml(\delta_e\otimes B|\RG\otimes \cN_2)=\ell(B|\cN_2).$$
 By the continuity of $\ell$, the first equality then follows.

 Note that for any finite set $A_0 \subseteq \RG\otimes \cN_1$, there exists finite $A \in \cF^0(\cN_1)$ such that $A_0 \subseteq \RG\otimes_R<A>_R$. Thus
 \begin{align*}
     &\ml(\RG\otimes_R\cN_1|\RG\otimes_R \cN_2)\\
     &=\sup_{A \in \cF^0(\cN_1)} \ml(\RG\otimes_R<A>_R|\RG\otimes_R \cN_2)\\
     &=\sup_{A \in \cF^0(\cN_1)} \ell(<A>_R|\cN_2)=\ell(\cN_1|\cN_2).
 \end{align*}
\end{proof}

\begin{remark}
    Let $R=\bZ F_2$ and $\ell$ be the weak length in Example \ref{wlexample} (4). Consider $\cN_1:=R(a-1)\oplus R(b-1) \subseteq R:=\cN_2$. Then we have $\RG\otimes_R \cN_1 \subseteq \RG\otimes_R\cN_2$ as $\RG$-modules. Note that $\cN_1\cong R^2$ as $R$-modules. By Proposition \ref{tensor case}, we have
    $$\ml(\RG\otimes \cN_1)=\ell(\cN_1|\cN_1)=2\ell(R|R)=2>1=\ell(R|R)=\ml(\RG\otimes \cN_2).$$
    This illustrates that mean weak length can also fail the monotonicity!
\end{remark}

 For  any group $\Gamma$, we can treat the product  $R^\Gamma$ as an $\RG$-module via the  left shift action of $\Gamma$  on $R^\Gamma$, i.e. $(sx)(t)=x(s^{-1}t)$ for any  $x \in R^\Gamma$ and $s, t \in \Gamma$. Endow ${\rm Hom}_R(\RG, R)$ with a $\RG$-module structure via
 $$(a\varphi)(x):=\varphi(xa)$$
 for any $\varphi \in {\rm Hom}_R(\RG, R)$ and $a, x \in \RG$. Then we have an isomorphism between $R^\Gamma$ and ${\rm Hom}_R(\RG, R)$ as $\RG$-modules.
\begin{corollary} \label{basic eg}
      For any $R$-weak length $\ell$, one has $\ml(R\Gamma|R^\Gamma)=\ell(R|R)$. 
\end{corollary}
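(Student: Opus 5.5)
We reduce the statement to a product computation inside $R^\Gamma$, in the spirit of the proof of Proposition \ref{tensor case}. Identify $\RG$ with the submodule of finitely supported elements of $R^\Gamma$; the shift action restricts to left multiplication, so $\RG\subseteq R^\Gamma$ is an inclusion of $\RG$-modules. Since $\delta_e$ generates $\RG$ as an $\RG$-module and $<\delta_e>_R=R\delta_e$, Proposition \ref{generating prop} gives
$$\ml(\RG|R^\Gamma)=\sup_{B\in \cF^0(R)} \ml(B\delta_e|R^\Gamma).$$
So it suffices to show that $\ml(B\delta_e|R^\Gamma)=\ell(B|R)$ for every $B\in\cF^0(R)$. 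Then upper continuity of $\ell$ yields $\sup_B \ell(B|R)=\ell(R|R)$.

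For fixed $B$ and $F\in\cF(\Gamma)$, the set $(B\delta_e)^{[F]}=\sum_{s\in F}s^{-1}B\delta_e$ consists of the functions supported on $F^{-1}$ with values in $B$. Under the $R$-module decomposition
$$R^\Gamma=R^{F^{-1}}\times R^{\Gamma\setminus F^{-1}},$$
this set is exactly $B^{F^{-1}}\times\{0\}$. By the product property and regularity, $\ell(B^{F^{-1}}\times\{0\}|R^{F^{-1}}\times R^{\Gamma\setminus F^{-1}})=\ell(B^{F^{-1}}|R^{F^{-1}})+0$. Iterating the product property over the finite set $F^{-1}$ gives $\ell(B^{F^{-1}}|R^{F^{-1}})=|F|\,\ell(B|R)$. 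Dividing by $|F|$ and passing to the limit gives $\ml(B\delta_e|R^\Gamma)=\ell(B|R)$.

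The main point to check is the splitting step. Unlike $\RG\otimes_R\cN_2$ in Proposition \ref{tensor case}, the ambient module $R^\Gamma$ is a full product rather than a direct sum, so we need to confirm that the product property is stated for arbitrary $R$-modules. It is: Definition \ref{weak length def} (2) applies to any pair $(A,\cM),(B,\cN)\in\cP_R$, and $R^\Gamma\cong R^{F^{-1}}\times R^{\Gamma\setminus F^{-1}}$ holds as $R$-modules. Proposition \ref{basic prop} (4) ensures this isomorphism does not change the value of $\ell$.

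It remains to make sure the quantities match. The only place local finiteness enters is the existence of the limit defining $\ml$. Here, however, the ratio $\ell((B\delta_e)^{[F]}|R^\Gamma)/|F|$ is the constant $\ell(B|R)$ for every $F$, so no Ornstein–Weiss argument is needed, and the statement holds for any $R$-weak length $\ell$, even if the value is $+\infty$.
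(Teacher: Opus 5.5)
Your argument is correct and is essentially the paper's proof: you reduce via Proposition \ref{generating prop} to the sets $B\delta_e$, split $R^\Gamma=R^{F^{-1}}\times R^{\Gamma\setminus F^{-1}}$ to get $\ell((B\delta_e)^{[F]}|R^\Gamma)=|F|\,\ell(B|R)$ from the product property and regularity, and finish with upper continuity. The only quibble is your closing aside: Proposition \ref{generating prop} relies, through Lemma \ref{subadd} and Lemma \ref{OW}, on the limits $\ml(A|R^\Gamma)$ existing for arbitrary $A\in\cF^0(\RG)$, and that is where the standing local-finiteness assumption of Section 4 is used, so the ratio being constant for the sets $B\delta_e$ does not by itself justify dropping that assumption.
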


\begin{proof}
By Proposition \ref{generating prop}, we have $$\ml(R\Gamma|R^\Gamma)=\ml(<\delta_e>_R|R^\Gamma)=\ml(R\delta_e|R^\Gamma).$$
For any $A \in \cF^0(R)$ and any $F \in \cF(\Gamma)$, since $\sum_{s \in F}s^{-1}A=\oplus_{s \in F} s^{-1}A$, using $\ell(0|\cM)=0$ and the direct product property of $\ell$, we have
$$\ell(\sum_{s \in F} s^{-1}A|R^\Gamma)=\ell(\sum_{s \in F} s^{-1}A|R^{F^{-1}}\times R^{\Gamma\setminus F^{-1}})=\sum_{s \in F} \ell(s^{-1}A|R^{s^{-1}})=\ell(A|R)|F|.$$
It follows that $\ml(A\delta_e|R^\Gamma)=\ell(A|R).$
By the upper continuity of $\ell$, we conclude
$$\ml(R\Gamma|R^\Gamma)=\ml(R\delta_e|R^\Gamma)=\sup_{A \in \cF^0(R)} \ml(A\delta_e|R^\Gamma)=\sup_{A \in \cF^0(R)}\ell(A|R)=\ell(R|R).$$
\end{proof}

In \cite[Corollary 6.5]{G10}, it was computed that $h(K^\bZ)=+\infty$ for any nontrivial abelian group $K$.   Using a general embedding construction due to Hanfeng Li (included with his permission), we can generalize this result to the following setting.
\begin{proposition} \label{product infinity}
Suppose that $\Gamma$ is an infinite countable amenable group and $\ell(R|R)> 0$. If $\ell$ is independent of ambient modules or $R$ is injective as  an $R$-module, we have $\ml(R^\Gamma)=+\infty$.
\end{proposition}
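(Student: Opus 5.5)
Since mean weak length is additive on direct products (Proposition \ref{sum is additive}) and Corollary \ref{basic eg} gives $\ml(R\Gamma|R^\Gamma)=\ell(R|R)>0$, the plan is to find inside $R^\Gamma$ an $\RG$-submodule isomorphic to $(R\Gamma)^{n}$ for every $n$, with ambient module $(R^\Gamma)^n$. Then we compare $\ml(R^\Gamma)$ from below with $n\,\ell(R|R)$ and let $n\to\infty$.

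The embedding is where $\Gamma$ infinite countable is used. Fix $n$. The $\RG$-module $(R^\Gamma)^n\cong R^{\Gamma\times\{1,\dots,n\}}$ carries the shift action on the first coordinate. Choose an injection $\theta\colon \Gamma\times\{1,\dots,n\}\to\Gamma$ that is $\Gamma$-equivariant for the left translation actions. This is possible because we can pick elements $g_1,\dots,g_n\in\Gamma$ and set $\theta(t,i)=tg_i$; we need the cosets $\Gamma g_i$ to be disjoint, which fails in $\Gamma$ itself. So instead we use the Hom description, $R^\Gamma\cong{\rm Hom}_R(\RG,R)$. Since $\Gamma$ is infinite, $\RG$ contains a copy of $\bigoplus_{i=1}^n \RG$ as a right $\RG$-module direct summand? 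That also fails in general.

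Following Li's idea, the more robust route is to embed $(R^\Gamma)^n$ into $R^\Gamma$ as $\RG$-modules via a map $\Phi$ built from a partition. Choose finite sets $F_1\subset F_2\subset\cdots$ and, using infinitude, elements $g_1,\dots,g_n$ such that $x\mapsto \sum_i x*\delta_{g_i}$ (right convolution, which commutes with the left shift) is injective on $(R\Gamma)^n$. For injectivity on $(\RG)^n$ it suffices that the $g_i$ are distinct. On $(R^\Gamma)^n$, right convolution by finitely supported elements is well defined, so
$$\Phi\colon (R\Gamma)^n\to R^\Gamma,\qquad (x_1,\dots,x_n)\mapsto \sum_i x_i\delta_{g_i},$$
is an $\RG$-map. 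However, injectivity on $(R\Gamma)^n$ only holds if $\sum_i x_i\delta_{g_i}=0$ forces $x_i=0$, which is false in general. So I would instead map into $R^\Gamma$ via infinite combinations: pick an infinite sequence $(g_i)$ and a family of elements $y_i\in R^\Gamma$ with supports in disjoint ``sparse'' sets $S_i$ such that the translates $\{sS_i\}$ are pairwise distinguishable. Then $\delta_e\otimes e_i\mapsto y_i$ extends to an injective $\RG$-map $(R\Gamma)^n\to R^\Gamma$. Alternatively one can look at the submodule $\cM_n$ generated by such $y_i$ together with a quotient $\RG$-map $R^\Gamma\to (R^\Gamma)^n$ (restriction to a partition $\Gamma=\bigsqcup_i \Gamma_i$ compatible with the action), sending $\cM_n$ onto $(R\Gamma)^n$.

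Now apply the two hypotheses. If $\ell$ is independent of ambient modules, then $\ml(R^\Gamma)\ge \ml(\cM_n)$, using monotonicity in the submodule, which holds for a fixed ambient module by the sup definition. The quotient property lets us compare $\ml(\cM_n|R^\Gamma)\ge \ml((R\Gamma)^n|(R^\Gamma)^n)=n\,\ell(R|R)$ via Proposition \ref{sum is additive} and Corollary \ref{basic eg}. If $R$ is injective, then $R^\Gamma\cong{\rm Hom}_R(\RG,R)$ is an injective $\RG$-module, so the embedding of $(R^\Gamma)^n$ splits as a direct product summand. Proposition \ref{sum is additive} then gives $\ml(R^\Gamma)\ge n\,\ml(R^\Gamma)$ plus a complementary term, which combined with a positive lower bound from Corollary \ref{basic eg} forces $+\infty$.

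The main obstacle is the construction of the equivariant embedding or quotient map $(R^\Gamma)^n\leftrightarrow R^\Gamma$. This step is not as clean as I would like: I expect it to need $\Gamma$ countable and infinite to choose a transversal-like set $S$ whose translates behave like independent coordinates. I would try first to realize $R^\Gamma\cong R^{\Gamma\times\bN}$-like coinduced modules via an equivariant bijection $\Gamma\to\Gamma\times\{1,\dots,n\}$ on a suitable invariant subset.
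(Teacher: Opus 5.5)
There is a genuine gap: the proposal never constructs the embedding it depends on, and its argument for the injective case rests on a map that does not exist in general.

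\textbf{The missing embedding.} The decisive step is an injective $\RG$-homomorphism $\varphi_n\colon(\RG)^n\to R^\Gamma$, and you never produce one. You rightly abandon your three concrete attempts. The replacement (``supports in sparse sets $S_i$ whose translates are pairwise distinguishable'') is neither specified nor shown to be injective, and you yourself flag it as the main obstacle. The missing idea is that each single $b_j\in R^\Gamma$ must separate every possible finite support at once, so it has to be built over all $F\in\cF(\Gamma)$ simultaneously. Choose $g_{F,s,j}$, for $F\in\cF(\Gamma)$, $s\in F$ and $1\le j\le n$, so that the sets $F^{-1}g_{F,s,j}$ are pairwise disjoint. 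This is possible by enumerating the countably many triples and choosing inductively, avoiding a finite set each time, since $\Gamma$ is infinite. Let $b_j$ be $1$ on $\{s^{-1}g_{F,s,j}: F\in\cF(\Gamma), s\in F\}$ and $0$ elsewhere. If all $x_k\in\RG$ are supported in $F$, disjointness gives $\bigl(\sum_k x_kb_k\bigr)(g_{F,s,j})=(x_j)_s$. Hence $(x_1,\dots,x_n)\mapsto\sum_j x_jb_j$ is injective.

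With this map, your independent-of-ambient case goes through, and more simply than you wrote; no quotient map is needed. Indeed $\ml(R^\Gamma)\ge\ml(\Ima\varphi_n|R^\Gamma)=\ml(\Ima\varphi_n|\Ima\varphi_n)=n\,\ml(\RG|\RG)=n\,\ell(R|R)$, by Propositions \ref{sum is additive} and \ref{tensor case}.

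\textbf{The injective case.} Your argument needs an $\RG$-embedding $(R^\Gamma)^n\hookrightarrow R^\Gamma$, and this does not exist in general. Such a map sends $\Gamma$-fixed points injectively to $\Gamma$-fixed points. These are the constant tuples $R^n$ in $(R^\Gamma)^n$ and the constants $R$ in $R^\Gamma$, so for $R$ a field (which is self-injective) and $n\ge 2$ it is impossible. Your other suggestions fail for similar reasons:
\begin{enumerate}
\item Restriction to a partition $\Gamma=\bigsqcup_i\Gamma_i$ is never $\Gamma$-equivariant for a nontrivial partition, because left translation is transitive on $\Gamma$.
\item There is no equivariant bijection $\Gamma\to\Gamma\times\{1,\dots,n\}$, since the source has one orbit and the target has $n$.
\end{enumerate}

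The map back to $(R^\Gamma)^n$ has to come from the injectivity hypothesis itself. Since $R^\Gamma\cong{\rm Hom}_R(\RG,R)$ is an injective $\RG$-module, so is $(R^\Gamma)^n$. Hence the inclusion $i\colon(\RG)^n\to(R^\Gamma)^n$ extends along $\varphi_n$ to some $\psi_n\colon R^\Gamma\to(R^\Gamma)^n$ with $\psi_n\circ\varphi_n=i$. Apply the quotient property of $\ell$ to $\psi_n$, which commutes with $A\mapsto A^{[F]}$. Together with Proposition \ref{sum is additive} and Corollary \ref{basic eg}, this gives $\ml(R^\Gamma)\ge\ml(\Ima\varphi_n|R^\Gamma)\ge\ml((\RG)^n|(R^\Gamma)^n)=n\,\ml(\RG|R^\Gamma)=n\,\ell(R|R)$. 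Letting $n\to\infty$ finishes the proof.
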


\begin{proof}
    Assume that $\varphi_n: (R\Gamma)^n \to R^\Gamma$ is an embedding as $R\Gamma$-modules for each $n \in \bN$. If $\ell$ is independent of ambient modules,  by Proposition \ref{tensor case}, as $n$ tends to the infinity, we have
    $$\ml(R^\Gamma) \geq \ml(\Ima\varphi|R^\Gamma)=\ml(\Ima \varphi_n|R^\Gamma)=\ml((\RG)^n|(\RG)^n) =n\ell(R|R)  \to +\infty.$$

    If $R$ is an injective $R$-module, then ${\rm Hom}_R(\cdot, R)$ is an exact functor. For any $\RG$-module $\cM$, there is a natural isomorphism from ${\rm Hom}_\RG(\cM, {\rm Hom}_R(\RG, R))$ to ${\rm Hom}_R(\cM, R)$ sending $\varphi$ to 
    $$\varphi'(x):=\varphi(x)(\delta_e)$$
    for any $x \in \cM$. It follows that $R^\Gamma \cong {\rm Hom}_R(\RG, R)$ is an injective $\RG$-module.  Applying the injectivity of $R^\Gamma$ to the inclusion map $i \colon (\RG)^n \to (R^\Gamma)^n$, there exists an $\RG$-module homomorphism $\psi_n \colon R^\Gamma \to (R^\Gamma)^n$ such that $\psi_n \circ\varphi_n=i$. By Corollary \ref{basic eg}, we have
    \begin{align*}
        \ml(R^\Gamma)&\geq \ml(\Ima \varphi_n|R^\Gamma)\\
        &\geq \ml(\Ima(\psi_n\circ \varphi_n)|(R^\Gamma)^n)\\
        &=\ml((\RG)^n|(R^\Gamma)^n)\\
        &=n\ml(\RG|R^\Gamma)=n\ell(R|R)  \to +\infty.
    \end{align*}
    
    Now we provide the desired embedding. This construction actually works for every infinite group $\Gamma$. For each $F \in \cF(\Gamma)$, each $s \in F$, and each $j \in \{1, 2,\dots, n\}$, choose $g_{F, s, j} \in \Gamma$ such that the sets $\{F^{-1}g_{F, s, j}: F \in \cF(\Gamma), s \in F, 1\leq j \leq n\}$ are pairwise disjoint. Since $\Gamma$ is infinite, one can easily find such elements $g_{F, s, j}$ satisfying this condition. For each $j \in \{1, 2, \dots, n\}$, define $b_j \in R^\Gamma$ to be $1$ at the positions $s^{-1}g_{F, s, j}$ for all  $F \in \cF(\Gamma)$  and all $s \in F$, and $0$ elsewhere.     It is then straightforward to verify that the map $(R\Gamma)^n \to R^\Gamma$ sending $(x_1,\dots, x_n)$ to $\sum_{j=1}^n x_jb_j$ is an injective $R\Gamma$-module homomorphism.
\end{proof}

\begin{example}
\begin{enumerate}
    \item Algebraic entropy $h$ can distinguish $(\bZ/2\bZ)\Gamma$ from $(\bZ/3\bZ)\Gamma$ as $\ZG$-modules, as Corollary \ref{basic eg} yields
    $$h((\bZ/2\bZ)\Gamma)=\log2\neq \log3= h((\bZ/3\bZ)\Gamma);$$
    However, it cannot distinguish $((\bZ/2\bZ)\Gamma)^2$ from $(\bZ/4\bZ)\Gamma$.
    \item Let $\cM=\ZG f$ for some nonzero $f=\sum_{s \in \Gamma} f_s s \in \ZG$. From \cite[Lemma 4.7]{CT15}, for any $N \in \bN$, one has
$$h([0, N)f|\cM) \geq \frac{\log N}{|K|^2},$$
where $K=\{s \in \Gamma: f_s \neq 0\}$ is the support of $f$. It follows that $h(\cM)=\infty$.

    \item Consider the mean weak length $\ml_k$ defined from the weak length $\ell_k$ as in Example \ref{wlexample} (3). For any $n \geq 2$, it can distinguish $((\bZ/k\bZ)\Gamma)^n$ from $(\bZ/k^n\bZ)\Gamma$ as $\ZG$-modules. To see this, using Propositions \ref{sum is additive} and \ref{generating prop}, we have
    $$\ml_k(((\bZ/k\bZ)\Gamma)^n)=n\ml_k((\bZ/k\bZ)\Gamma)=n\ml_k((\bZ/k\bZ)\delta_e) =n\log k.$$
    Meanwhile, using Proposition \ref{generating prop}, we have
    \begin{align*}
        \ml_k((\bZ/k^n\bZ)\Gamma)&=\ml_k((\bZ/k^n\bZ) \delta_e)\\
        &=\log|\{k^{n-1}i\mod k^n: i=0, 1, \cdots, k-1\}|\\
        &=\log k.
    \end{align*}
\end{enumerate}    
\end{example}

\section{Addition formula}

In this section, we present the proof of addition formula in Theorem \ref{main theorem}. The corollary \ref{main cor} then follows from Theorem \ref{main theorem} and Propositions \ref{length upgrading}, and \ref{log upgrading}. 

\begin{proof}[Proof of Theorem \ref{main theorem}]
Write $\pi \colon \cM_3 \to \cM_3/\cM_1$ for the quotient map. We first show the hard direction:
$$\ml(\cM_2|\cM_3) \leq \ml(\cM_1|\cM_3)+\ml(\pi(\cM_2)|\pi(\cM_3)).$$

Fix $\varepsilon >0$ and $A \in \cF^0(\cM_2)$. It suffices to find $B \in \cF^0(\cM_1)$ such that
$$\ml(A|\cM_3) \leq \ml(\pi(A)|\pi(\cM_3))+\ml(B|\cM_3) + 3\varepsilon+\varepsilon\ell(A|\cM_3).$$
For $\ml(\pi(A)|\pi(\cM_3))$ there exists $K_0 \in \cF(\Gamma)$ and $\delta_0 > 0$ such that for every $(K_0, \delta_0)$-invariant $F \in \cF(\Gamma)$, i.e. $|\{s \in F: K_0s \subseteq F\}|\geq (1-\delta_0)|F|$,  one has
\[
\ell(\pi(A)^{[F]}|\pi(\cM_3))
\leq\left(\ml(\pi(A)|\pi(\cM_3))+\varepsilon\right)|F|.
\]

By quasitiling Lemma \cite[Theorem 8.3]{L12}, there exist $ K_1 \in \cF(\Gamma)$ and $\delta_1>0$ and $ F_1,\dots, F_m \in \cF(\Gamma)$ satisfying that   
\begin{enumerate}
    \item[(i)] each $F_j$ is $(K_0,\delta_0)$-invariant and contains the identity of $\Gamma$;
    \item[(ii)] for every $(K_1, \delta_1)$-invariant $F \in \cF(\Gamma)$, there exist $D_1, \cdots, D_m \in \cF(\Gamma)$  such that 
    $$\bigsqcup_{j=1}^m \bigsqcup_{c \in D_j} F_jc \subseteq F$$
is a pairwise disjoint union and  $\left|F\setminus\bigsqcup_{j=1}^m F_jD_j\right|<\varepsilon|F|$.
\end{enumerate}

For each $j=1, \dots, m$, since $\ell(\cdot, \cdot|\cdot)$ is proper  (see Definition \ref{upgrading def}), there exists $B_j \in \cF(\cM_1)$ such that
$$\ell(A^{[F_j]}, B_j|\cM_3) < \ell(\pi(A^{[F_j]})|\pi(\cM_3)) +\varepsilon$$
(for algebraic entropy case $B_j$ can be chosen as$(A^{[F_j]}-A^{[F_j]})\cap \cM_1$).
Put $B=\cup_{j=1}^m B_j\cup \{0\}$. For $\ml(B|\cM_3)$, there exists $K_2 \in \cF(\Gamma)$ and $\delta_2 > 0$ such that for every $(K_2, \delta_2)$-invariant $F \in \cF(\Gamma)$, one has
\[
\ell(B^{[F]}|\cM_3)\leq \left(\ml(B|\cM_3)+\varepsilon\right)|F|.
\]

\textbf{Claim:} For any $(K_1, \delta_1)$-invariant $F \in \cF(\Gamma)$, we have
$$\ell(A^{[F]}, B^{[F]}|\cM_3)\leq |F|(\ml(\pi(A)|\pi(\cM_3))+2\varepsilon+\varepsilon \ell(A|\cM_3)).$$

Therefore, since $\ell(\cdot, \cdot)$ is proper, as $F \in \cF(\Gamma)$ is both $(K_1, \delta_1)$-invariant and $(K_2, \delta_2)$-invariant, we have
\begin{align*}
  \ell(A^{[F]}|\cM_3)& \leq\ell(A^{[F]}, B^{[F]}|\cM_3)+\ell(B^{[F]}|\cM_3)  \\
  &\leq |F|\bigl(\ml(\pi (A)|\pi(\cM_3))+2\varepsilon+\varepsilon\ell(A|\cM_3)\bigr)+|F|\bigl(\ml(B|\cM_3)+\varepsilon\bigr).
\end{align*}
Dividing both sides by $|F|$, and let $F$ gets more and more invariant, we obtain
\[
\ml(A|\cM_3)\leq \ml(\pi (A)|\pi(\cM_3))+\ml(B|\cM_3)+3\varepsilon+\varepsilon\ell(A|\cM_3).
\]

\noindent \textbf{Proof of Claim:}
Write $\varphi(F):=\ell(A^{[F]}, B^{[F]}|\cM_3)$.
Note that each $F_j$ is $(K_0, \delta_0)$-invariant, $0\in B_j$, and $e_{\Gamma}\in F_j$. Using the quasitiling $\{F_jD_j\}_j$  as above and Proposition \ref{basic bivariant} (1), we have

\begin{align*}
\varphi(F)&\leq\varphi\left(\bigsqcup_{j=1}^m F_jD_j\right)+\varphi(F\setminus \bigsqcup_{j=1}^m F_jD_j)\\
 &\leq\sum_{j=1}^{m}\varphi(F_j)|D_j|+\varepsilon|F|\ell(A|\cM_3) \\
 &\leq\sum_{j=1}^m|D_j|\ell(A^{[F_j]}, B_j^{[F_j]}|\cM_3)+\varepsilon|F|\ell(A|\cM_3) \\
&\leq\sum_{j=1}^m|D_j|\ell(A^{[F_j]}, B_j|\cM_3)+\varepsilon|F|\ell(A|\cM_3) \\
&\leq \sum_{j=1}^m|D_j|(\ell(\pi(A)^{[F_j]}|\pi(\cM_3))+\varepsilon)+\varepsilon|F|\ell(A|\cM_3) \\
&\leq\bigl(\ml(\pi(A)|\pi(\cM_3))+2\varepsilon\bigr)\sum_{j=1}^m|F_j||D_j|+\varepsilon |F|\ell(A|\cM_3)\\
&\leq\bigl(\ml(\pi(A)|\pi(\cM_3))+2\varepsilon\bigr)|F|+\varepsilon|F|\ell(A|\cM_3).
\end{align*}

Now we show the easy direction:
$$\ml(\cM_2|\cM_3) \geq \ml(\cM_1|\cM_3)+\ml(\pi(\cM_2)|\pi(\cM_3)).$$
For any $B \in \cF^0({\cM_1})$ and $C \in \cF^0(\cM_2/\cM_1)$, there exists $B_1 \in \cF^0(\cM_2)$ such that $C=\pi(B_1)$. Put $A =B+B_1$. Since $\ell$ has the strong quotient property (see Definition \ref{weak length def}), for any $F \in \cF(\Gamma)$, we have $A^{[F]}=B^{[F]}+ B_1^{[F]}$ and hence
$$\ell(A^{[F]}|\cM_3) = \ell(B^{[F]}+ B_1^{[F]}|\cM_3) \geq \ell(B^{[F]}|\cM_3)+\ell(C^{[F]}|\pi(\cM_3)).$$
Dividing both sides by $|F|$, and let $F$ gets more and more invariant, we obtain
$$\ml(\cM_2|\cM_3)\geq \ml(A|\cM_3)\geq \ml(B|\cM_3)+\ml(C|\pi(\cM_3))$$
and the conclusion follows.
\end{proof}

\begin{example}
 From the addition formula of mean weak length and Proposition \ref{product infinity}, we conclude that 
    $$\ml(R^\Gamma/R\Gamma)=+\infty$$ 
provided that $R$ as an $R$-module is injective or $\ell$ is independent of ambient modules. and$\ell(R|R) \in (0, +\infty)$.
\end{example}

We end our discussion with the following question.

\begin{question}
Does the mean weak length induced from Malcev rank in Example \ref{wlexample}
satisfy the addition formula?
\end{question}


\end{document}